\begin{document}

\author{Dragomir \v Sari\' c}

\address{Department of Mathematics,
The Graduate Center of CUNY, 365 Fifth Avenue, Room 4208, New York,
NY 10016-4309 and, \vskip .1 cm Department of Mathematics, 237 Kiely
Hall, 65-30 Kissena Blvd Flushing, NY 11367}

\email{Dragomir.Saric@qc.cuny.edu}

\theoremstyle{definition} 

 \newtheorem{definition}{Definition}[section]
 \newtheorem{remark}[definition]{Remark}
 \newtheorem{example}[definition]{Example}

\newtheorem*{notation}{Notation}  

\theoremstyle{plain}      

 \newtheorem{proposition}[definition]{Proposition}
 \newtheorem{theorem}[definition]{Theorem}
 \newtheorem{corollary}[definition]{Corollary}
 \newtheorem{lemma}[definition]{Lemma}

\def\H{{\mathbf H}}
\def\F{{\mathcal F}}
\def\R{{\mathbf R}}
\def\Q{{\mathbf Q}}
\def\Z{{\mathbf Z}}
\def\E{{\mathcal E}}
\def\N{{\mathbf N}}
\def\X{{\mathcal X}}
\def\Y{{\mathcal Y}}
\def\C{{\mathbf C}}
\def\D{{\mathbf D}}
\def\G{{\mathcal G}}
\def\T{{\mathcal T}}

\title{Circle homeomorphisms and shears}

\subjclass{}

\keywords{}
\date{\today}

\maketitle

\begin{abstract}
We give parameterizations of homeomorphisms, quasisymmetric maps and
symmetric maps of the unit circle in terms of shear coordinates for
the Farey tesselation.
\end{abstract}

\section{Introduction}

The space $Homeo(S^1)$ of orientation preserving homeomorphisms of
the unit circle $S^1$ is a classical topological group which is of
interest in various fields of mathematics \cite{Gh} and in the
bosonic string theory in physics \cite{Wi}, \cite{NS}, \cite{Pe}. An
important subgroup $QS(S^1)$ of quasisymmetric maps of $S^1$ plays a
fundamental role in the Teichm\"uller theory of Riemann surfaces
\cite{A}, \cite{Be}, \cite{GL}. In fact, the {\it universal
Teichm\"uller space} consists of all quasisymmetric maps which fix
three distinguished points on $S^1$ namely it is isomorphic to
$M\ddot{o}b(S^1)\backslash QS(S^1)$, where $M\ddot{o}b(S^1)$ is the
group of (orientation preserving) M\"obius maps which preserve $S^1$
\cite{Be}. The subgroup of symmetric maps $Sym(S^1)$ plays a
prominent role in studying Teichm\"uller spaces of real dynamical
systems \cite{GS}, \cite{EGL}, \cite{J}.

\vskip .2 cm

The main results in this article are explicit parametrizations of
the spaces $M\ddot{o}b(S^1)\backslash Homeo(S^1)$,
$M\ddot{o}b(S^1)\backslash QS(S^1)$ and $M\ddot{o}b(S^1)\backslash
Sym(S^1)$ in terms of shear coordinates for the Farey tesselation of the hyperbolic plane $\H$.
To our best knowledge these are the only known explicit
parametrizations of the above coadjoint orbit spaces. The unit
circle $S^1$ is the boundary at infinity of
$\H$.

\vskip .2 cm

The {\it shear} of the pair $(\Delta ,\Delta_1)$ of ideal
hyperbolic triangles with disjoint interiors and a common boundary
edge $e$ is the signed hyperbolic distance between the orthogonal
projections of the third vertices of $\Delta ,\Delta_1$ onto $e$
(see \cite{Th}, \cite{Bo}, \cite{Pe1}, or Section 3). The {\it
Farey tesselation} $\F$ is a locally finite ideal geodesic
triangulation of $\H$ which is preserved by the hyperbolic
reflections in edges of $\F$ (see, for example, \cite{Pe}). The
set of edges of $\F$ is naturally partitioned into Farey
generations (see \cite{Pe} or Section 3). The shear of each pair
of adjacent complementary triangles of $\F$ is zero.

\vskip .2 cm

A homeomorphism $h:S^1\to S^1$ induces a map $s_h:\F\to\R$, called
the {\it shear map}, from the Farey tesselation $\F$ to the set $\R$
of real numbers as follows. Each $e\in\F$ is the common boundary
side of a pair $(\Delta ,\Delta_1)$ of complementary triangles of
$\F$. We define $s_h(e)$ to be the shear of the image pair
$(h(\Delta ),h(\Delta_1))$. It was known to be a challenging problem
to characterize which maps $s:\F\to\R$ arise from homeomorphisms and
which arise from quasisymmetric maps of $S^1$. We answer these
questions below. (For a punctured surfaces $S'$, the Teichm\"uller
space $T(S')$ is parameterized using shears by Thurston \cite{Th}
and Penner \cite{Pe1}; in the case of a closed surfaces $S$,
Thurston \cite{Th1} and Bonahon \cite{Bo} gave a parameterization of
$T(S)$ using shears on locally infinite tesselations.)

\vskip .2 cm

A {\it fan of geodesics} in $\F$ with {\it tip} $p\in S^1$ consists
of all edges of $\F$ which have one endpoint $p$. Each fan in $\F$
has a natural ordering as follows. Fix a horocycle $C$ with center
at $p$ whose orientation is such that the corresponding horoball is
to the left of $C$. If $e,e'$ are two geodesics with a common
endpoint $p$, then we define $e<e'$ if point $e\cap C$ comes before
point $e'\cap C$ on $C$, otherwise $e'<e$. The natural ordering on a
fan induces a bijective correspondence of the geodesics of the fan
with the integers $\Z$, and any two such correspondences differ by a
translation in $\Z$. For each fan of $\F$ we fix one such
correspondence.

\vskip .2 cm

\noindent {\bf Theorem A.} {\it A shear map $s:\F\to \R$ is induced
by a quasisymmetric map of $S^1$ if and only if there exists $M\geq
1$ such that for each fan of geodesics $e_n\in\F$, $n\in\Z$, and for
all $m,k\in\Z$, we have
$$
\frac{1}{M}\leq \frac{e^{\frac{s_m}{2}}+e^{\frac{s_m}{2}+s_{m+1}}+\cdots
+e^{\frac{s_m}{2}+s_{m+1}+\cdots
+s_{m+k}}}{e^{-\frac{s_m}{2}}+e^{-\frac{s_m}{2}-s_{m-1}}+\cdots
+e^{-\frac{s_m}{2}-s_{m-1}-\cdots -s_{m-k}}}\leq M,
$$
where $s_n=s(e_n)$.

\noindent Moreover, $s:\F\to\R$ is induced by a symmetric map of $S^1$ if and only if
$$
\frac{e^{\frac{s_m}{2}}+e^{\frac{s_m}{2}+s_{m+1}}+\cdots
+e^{\frac{s_m}{2}+s_{m+1}+\cdots
+s_{m+k}}}{e^{-\frac{s_m}{2}}+e^{-\frac{s_m}{2}-s_{m-1}}+\cdots
+e^{-\frac{s_m}{2}-s_{m-1}-\cdots -s_{m-k}}}\rightrightarrows 1
$$
as the Farey generations of $e_{m-k}$ and $e_{m+k}$ go to infinity. }

\vskip .2 cm

For a fan of $\F$ with tip $p$, we define
$$
s(p;m,k)=\frac{e^{\frac{s_m}{2}}+e^{\frac{s_m}{2}+s_{m+1}}+\cdots
+e^{\frac{s_m}{2}+s_{m+1}+\cdots
+s_{m+k}}}{e^{-\frac{s_m}{2}}+e^{-\frac{s_m}{2}-s_{m-1}}+\cdots
+e^{-\frac{s_m}{2}-s_{m-1}-\cdots -s_{m-k}}}
$$
for $m,k\in\Z$. Let $C$ be a horoball with center at $h(p)$ where
$h$ is a quasisymmetric map which induces $s$. Then $s(p;m,k)$
represents the ratio of the length of the horocyclic arc on $C$
between $h(e_{m+k})$ and $h(e_m)$ to the length of the horocyclic
arc on $C$ between $h(e_{m-k})$ and $h(e_m)$. Define
$$M_s(p)=\sup_{m,k\in\Z} s(p;m,k).$$ If $M_s(p)<\infty$ then we
say that $s$ satisfies $M_s(p)$-{\it condition at the fan with
tip} $p$. The above theorem states that a shear map $s:\F\to\R$
induces a quasisymmetric map if and only if
\begin{equation}
\label{M-cond}
M_s=\sup_p M_s(p)<\infty
\end{equation}
where the supremum is over all fans of $\F$.

\vskip .2 cm

It is quite surprising that the characterization of shears which
give rise to quasisymmetric maps is so simple. The
$M_s(p)$-condition is localized in a single fan of geodesics with
tip $p$ and the only additional information is that single
$M_s=\sup_pM_s(p)$ works for all fans simultaneously. In
particular, there is no information as how shears on close by
geodesics not belonging to a single fan relate to each other.

\vskip .2 cm

We now interpret the Teichm\"uller topology of the universal
Teichm\"uller space $M\ddot{o}b(S^1) \backslash QS(S^1)$ within the
framework of Theorem A. That theorem parametrizes $M\ddot{o}b(S^1)
\backslash QS(S^1)$ by the space $\X$ of all shear maps $s:\F\to\R$
which satisfy (\ref{M-cond}). We use $s(p;m,k)$ to introduce a
natural topology on $\X$ such that the parametrization of
$M\ddot{o}b(S^1)\backslash QS(S^1)$ by $\X$ is a homeomorphism. For
$s_1,s_2\in\X$ define
$$
M_{s_1,s_2}(p)=\sup_{m,k}\Big{(}\max\Big{\{}
\frac{s_1(p;m,k)}{s_2(p;m,k)},\frac{s_2(p;m,k)}{s_1(p;m,k)}\Big{\}}\Big{)}.
$$

\vskip .2 cm

\noindent {\bf Theorem B.} {\it Let $h_n,h\in
M\ddot{o}b(S^1)\backslash QS(S^1)$. Then $h_n\to h$ as $n\to\infty$
in the Teichm\"uller topology if and only if
$M_{s,s_n}=\sup_pM_{s,s_n}(p)\to 1$ as $n\to\infty$.}

\vskip .2 cm

Surprisingly enough the characterization of homeomorphisms involves
more information then the parametrization of quasisymmetric
homeomorphisms given by Theorem A. A {\it chain of geodesics} in
$\F$ is a sequence $e_n\in\F$ of distinct edges such that $e_n$ and
$e_{n+1}$ share a common endpoint for all $n\in\N$.

\vskip .2 cm

\noindent {\bf Theorem C.} {\it A shear map $s:\F\to \R$ is induced
by a homeomorphism of $S^1$ if and only if for each chain
$e_n\in\F$, $n\in\N$, we have
$$
\sum_{n=1}^{\infty} e^{s_1^n+s_2^n+\cdots +s_n^n}=\infty
$$
where $s_i^n=\pm s(e_i)$. More precisely if $e_n<e_{n+1}$ then
$s_n^n=s(e_n)$; otherwise $s_n^n=-s(e_n)$. For $n>1$ and $i<n$,
$s_i^n=s(e_i)$ if either $e_i<e_{i+1}$ and the number of times we
change fans from $e_{i}$ to $e_{n+1}$ is even, or $e_i>e_{i+1}$ and
the number of times we change fans is odd; otherwise
$s_i^n=-s(e_i)$.}

\vskip .2 cm

A locally finite ideal triangulation of $\H$ with a distinguished
oriented edge is called a {\it tesselation}. The space of all
tesselations is isomorphic to the space $Homeo(S^1)$ by assigning
to a tesselation $\tau$ a homeomorphism of $S^1$ (called the {\it
characteristic map}) which maps the Farey tesselation $\F$ to the
tesselation $\tau$ of $\H$ such that a distinguished oriented edge
of $\F$ is mapped onto the distinguished oriented edge of $\tau$
(see Penner \cite{Pe}). A {\it decorated tesselation} is a
tesselation together with an arbitrary assignment of a horocycle
at each vertex of the tesselation (see \cite{Pe}).

\vskip .2 cm

Let $C_1$ and $C_2$ be two horocycles with different centers and
let $g$ be the geodesic whose endpoints are at the centers of
$C_1$ and $C_2$. Then the {\it lambda length} $\lambda (g)$ of $g$
is defined by
$$
\lambda (g)=e^{-2\delta (C_1,C_2)}
$$
where $\delta (C_1,C_2)$ is the signed hyperbolic distance between
$G_1=g\cap C_1$ and $G_2=g\cap C_2$. The sign of $\delta
(C_1,C_2)$ is positive if the geodesic arc between $G_1$ and $G_2$
is outside $C_1$, otherwise the sign is negative. Let $g,g_1$ be a
wedge of geodesics in $\H$ and let $C$ be a horocycle with center
at the common endpoint of $g$ and $g_1$. The {\it horocyclic
length} $\alpha (g,g_1)$ of the wedge $g,g_1$ is the length of the
arc of $C$ between $g$ and $g_1$. A decorated tesselation
$\tilde{\tau}$ determines an assignment of lambda lengths to the
edges of $\tau$ and of horocyclic lengths to the wedges of $\tau$.
This in turn defines an assignment of lambda lengths to the edges
of the Farey tesselation $\F$ by the pull-back with the
characteristic map as well as the assignment of horocyclic lengths
to the wedges of $\F$ (see Penner \cite{Pe}, \cite{Pe1}).

\vskip .2 cm

Two decorated tesselations $\tilde{\tau}_1$ and $\tilde{\tau}_2$
induce the same lambda lengths on $\F$ if and only if
$\tilde{\tau}_1$ is the image under an element of $M\ddot{o}b(S^1)$
of $\tilde{\tau}_2$. It is clear that not every assignment of lambda
lengths on the Farey tesselation $\F$ will give a decorated
tesselation such that the characteristic map is a homeomorphism of
$S^1$. In fact the underlying tesselation is not in general an
ideal triangulation of $\H$. Penner \cite{Pe} posed the problem of
determining which lambda lengths will give characteristic maps that
are homeomorphisms or quasisymmetric maps of $S^1$. Penner and
Sullivan \cite[Theorem 6.4]{Pe} showed that if lambda lengths are
``pinched'' namely if there is $K\geq 1$ such that $1/K\leq\lambda
(e)\leq K$ for all $e\in\F$ then the characteristic map is
quasisymmetric. We find necessary and sufficient conditions on the
lambda lengths such that the characteristic maps are homeomorphisms,
quasisymmetric or symmetric maps of $S^1$.

\vskip .3 cm

\noindent {\bf Theorem D.} {\it A lambda length function $\lambda
:\F\to\R^{+}$ induces a homeomorphism of $S^1$ if and only if for
each chain of edges $e_n\in\F$, $n\in\N$, we have
$$
\sum_{n=1}^{\infty}\Big{(}\lambda_n^{-\frac{1}{2}}
\lambda_{n-1}^{\frac{1}{2}}\cdots \lambda_1^{\frac{(-1)^n}{2}}\Big{)}\alpha_n=\infty
$$
where $\lambda_i=\lambda(e_i)$ and $\alpha_n$ is the horocyclic
length of the wedge bounded by $e_n$ and $e_{n+1}$.}

\vskip .3 cm

In the above theorem we used horocyclic length $\alpha_n$ together
with the lambda lengths. We note that horocyclic lengths are
expressed as rational functions of lambda lengths (see Penner
\cite{Pe1}, \cite[Section 6]{Pe}). Indeed, if $g_1,g_2,g_3$ are
edges of an ideal triangle with decorations then by \cite{Pe1} we
have
$$
\alpha (g_1,g_2)=\frac{2\lambda (g_3)}{\lambda (g_1)\lambda (g_2)}.
$$
Thus the series in the above theorem is completely determined in terms of lambda lengths.

\vskip .2 cm

The following theorem gives necessary and sufficient conditions on
horocyclic lengths such that the characteristic maps are
quasisymmetric and symmetric. We note that it is possible to express
the same condition in terms of lambda lengths using the formula above.

\vskip .3 cm

\noindent {\bf Theorem E.} {\it  A lambda length function $\lambda
:\F\to\R$ induces a quasi-symmetric map of $S^1$ if and only if
there exists $K\geq 1$ such that for each fan $e_n\in\F$,
$n\in\Z$, and for all $m\in\Z$ and $k\in\N$ we have
$$
\frac{1}{K}\leq\frac{\alpha (e_m,e_{m+1})+\alpha (e_{m+1},e_{m+2})+\cdots +\alpha (e_{m+k},e_{m+k+1})}
{\alpha (e_m,e_{m-1})+\alpha (e_{m-1},e_{m-2})+\cdots +\alpha (e_{m-k},e_{m-k-1})}\leq K.
$$

\noindent
Moreover, $\lambda :\F\to\R$ induces a symmetric
map of $S^1$ if and only if
$$
\frac{\alpha (e_m,e_{m+1})+\alpha (e_{m+1},e_{m+2})+\cdots +\alpha (e_{m+k},e_{m+k+1})}
{\alpha (e_m,e_{m-1})+\alpha (e_{m-1},e_{m-2})+\cdots +\alpha (e_{m-k},e_{m-k-1})}\to 1
$$
as the Farey generations of $e_{m+k}$ and $e_{m-k}$ go to infinity independently of the fan.
}

\section{Quasisymmetric maps and barycentric extension}

In the rest of the paper the hyperbolic plane is identified with
the upper half-plane model $\H :=\{ z=x+iy|\ y>0\}$ endowed with
the metric $\rho (z)=\frac{|dz|}{y}$. The boundary at infinity
$\partial_{\infty}\H=\hat{\R}=\R\cup\{\infty\}$ is naturally
identified with the unit circle $S^1$. Any two identifications of
$\hat{\R}$ and $S^1$ differ by the postcomposition by a M\"obius
map of $S^1$. We choose $0$, $1$ and $\infty$ to be the three
distinguished points on $\hat{\R}$.

\vskip .2 cm

Let $h:\hat{\R}\to\hat{\R}$ be a homeomorphism that fixes $\infty$
and let $M\geq 1$. Then $h:\hat{\R}\to\hat{\R}$ is said to be
$M$-{\it quasisymmetric} if
$$
\frac{1}{M}\leq\frac{h(x+t)-h(x)}{h(x)-h(x-t)}\leq M
$$
for all $x\in\R$ and $t>0$ (see \cite{A}).

\vskip .2 cm

The {\it universal Teichm\"uller space} $T(\H )$ is the set of all
quasisymmetric maps of $\hat{\R}$ that fix $0$, $1$ and $\infty$.
A sequence $h_n\in T(\H )$ converges to the basepoint $id\in T(\H
)$ in the {\it Teichm\"uller topology} if $h_n$ are
$M_n$-quasisymmetric with $M_n\to 1$ as $n\to\infty$. A sequence
$h_n\in T(\H )$ converges to $h\in T(\H )$ in the {\it
Teichm\"uller topology} if $h_n\circ h^{-1}\to id$ as $n\to\infty$
in the above sense.

\vskip .2 cm

A quasisymmetric map $h:\hat{\R}\to\hat{\R}$ extends to a
quasiconformal map $f:\H\to\H$, and conversely a quasiconformal
map $f:\H\to\H$ extends by continuity to a quasisymmetric map
$h:\hat{\R}\to\hat{\R}$ (see \cite{A}). The extension of
$h:\hat{\R}\to\hat{\R}$ to a quasiconformal map of $\H$ is not
unique. Douady and Earle defined a particularly nice extension
operator from quasisymmetric maps of $\hat{\R}$ into
quasiconformal maps of $\H$ called the {\it barycentric extension}
(see \cite{DE}).

\vskip .2 cm

For a homeomorphism $h:\hat{\R}\to\hat{\R}$, denote by
$ex(h):\H\to\H$ its barycentric extension introduced in \cite{DE}.
We recall several properties of $ex(h)$ that are obtained by
Douady and Earle \cite{DE}. The barycentric extension $ex(h)$ is a
real-analytic diffeomorphism of $\H$ which is quasiconformal if
and only if $h$ is quasisymmetric. Moreover, the extension is
conformaly natural in the sense that $ex(A\circ h\circ B)=A\circ
ex(h)\circ B$ for all $A,B\in PSL_2(\R )$ and for all
homeomorphisms $h:\hat{\R}\to\hat{\R}$. In addition, if $h_n\to h$
as $n\to\infty$ pointwise on $\hat{\R}$ then $ex(h_n)\to ex(h)$ as
$n\to\infty$ in the $C^{\infty}$-topology on $C^{\infty}$ maps of
$\H$. In particular, Beltrami coefficients $\mu (ex(h_n))$ of
$ex(h_n)$ converge uniformly on compact subsets of $\H$ to the
Beltrami coefficient $\mu (ex(h))$ of $ex(h)$.

\begin{remark}
For our purposes the barycentric extension serves quite well. Kahn
and Markovic \cite{KM} constructed another quasiconformal extension
in the case when the quasisymmetric maps are invariant under
co-finite Fuchsian group in order to be able to estimate the norm of
the corresponding Beltrami coefficient.
\end{remark}

\vskip .2 cm

The following lemma is obtained by Markovic \cite{Mar} (see also Douady-Earle \cite{DE} and
Abikoff-Earle-Mitra \cite{AEM}).

\begin{lemma}
\label{baryc} Let $h_n:\hat{\R}\to\hat{\R}$ be a sequence of
homeomorphisms which fix $0$, $1$ and $\infty$, and let $\mu_n$ be
Beltrami coefficients of the barycentric extensions $ex(h_n)$ of
$h_n$. If there exists $c_0\geq 1$ such that
$$-c_0\leq h_n(-1)\leq -\frac{1}{c_0}$$
then there exists a neighborhood
$U$ of the imaginary unit $i\in\H$ and a constant $0<c<1$ such that
$$
\|\mu_n|_U\|_{\infty}\leq c<1
$$
for all $n$.
\end{lemma}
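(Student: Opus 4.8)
The plan is to exploit the conformal naturality of the barycentric extension together with a compactness argument. First I would note that the hypothesis $-c_0 \leq h_n(-1) \leq -1/c_0$, combined with the normalization that each $h_n$ fixes $0$, $1$, and $\infty$, pins down the images of four points on $\hat{\R}$ within a compact range. This is exactly the kind of ``spread-out'' control that prevents the family $\{h_n\}$ from degenerating: the four points $h_n(-1), h_n(0)=0, h_n(1)=1, h_n(\infty)=\infty$ stay in a compact subset of the configuration space of distinct quadruples on $\hat{\R}$. The key structural fact I would invoke is the Douady--Earle result quoted in the excerpt: the barycentric extension is conformally natural, real-analytic, and continuous in the sense that pointwise convergence $h_n \to h$ on $\hat{\R}$ implies $C^\infty$-convergence of $ex(h_n)$, and in particular local uniform convergence of the Beltrami coefficients $\mu(ex(h_n)) \to \mu(ex(h))$.

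The main step is a normal families / contradiction argument. Suppose the conclusion fails; then there is a subsequence (still denoted $h_n$) along which $\|\mu_n\|_{L^\infty(U)} \to 1$ for every neighborhood $U$ of $i$, equivalently $\operatorname{ess\,sup}_{z \to i} |\mu_n(z)| \to 1$. Because the normalizing values $h_n(-1), 0, 1, \infty$ lie in a compact set of admissible configurations, I would pass to a further subsequence so that $h_n(-1)$ converges to some point in $[-c_0, -1/c_0]$, which in particular is bounded away from $0$, $1$, and $\infty$. The subtle point is that pointwise convergence of the full homeomorphisms $h_n$ is not guaranteed — only the four marked values are controlled. So the argument cannot directly apply the $C^\infty$-continuity of $ex$ to the sequence itself.

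The hard part will be converting four-point control into a genuine limit against which to compare. The cleanest route is to use the limiting configuration to produce a M\"obius map that re-normalizes away the degeneration: by conformal naturality, $ex(A \circ h_n) = A \circ ex(h_n)$, so the Beltrami coefficient of $ex(h_n)$ at $i$ is unchanged in modulus under postcomposition by M\"obius maps, and precomposition by M\"obius maps moves the evaluation point but again preserves the essential structure. Thus I would argue that if $|\mu_n(i)| \to 1$ then $ex(h_n)$ is degenerating near $i$, which forces the four controlled boundary values to collapse together or spread apart in a way incompatible with the bound $-c_0 \leq h_n(-1) \leq -1/c_0$. Concretely, the barycentric extension sends the barycenter of the measure $(h_n)_*(\text{conformal measure at } i)$ to $i$; the four points $0,1,\infty,h_n(-1)$ staying in general position keeps this pushed-forward measure from concentrating, and a measure that does not concentrate yields a Beltrami coefficient bounded away from $1$ at $i$ by the explicit Douady--Earle formula.

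The cleanest formalization, which I expect to be the actual obstacle to write rigorously, is to phrase everything through the explicit Douady--Earle integral for $ex$ and its derivative at $i$: one shows that the conformal barycenter map and its Jacobian depend continuously on the pushforward probability measure on $\hat{\R}$ in the weak-$*$ topology, and that $|\mu(ex(h))(i)|<1$ whenever that measure assigns positive mass to at least three distinct points with a definite separation. Since the four marked values provide exactly such a separation uniformly in $n$, one extracts a weak-$*$ convergent subsequence of the pushforward measures, passes to the limit in the Douady--Earle formula to get a limiting Beltrami value of modulus strictly less than $1$, and thereby contradicts $|\mu_n(i)| \to 1$. The uniform bound $c<1$ on a neighborhood $U$ of $i$ then follows from the real-analyticity of $ex(h_n)$ together with the same equicontinuity in the weak-$*$ topology, giving a uniform modulus of continuity that shrinks $U$ independently of $n$.
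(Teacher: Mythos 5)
Your overall strategy --- reduce everything to non-concentration of the pushforward measures $(h_n)_*\eta_i$ (where $\eta_i$ is harmonic measure at $i$) and then invoke weak-$*$ continuity of the Douady--Earle barycenter and of its linearization --- is genuinely different from the paper's proof, which is a one-line reduction: the quadruple $\{\infty,h_n(-1),0,1\}$ is uniformly separated in the angle metric at $i$, and the conclusion is quoted directly from Markovic's Lemma 3.6. The gap in your version is the step where you assert that ``the four marked values provide exactly such a separation uniformly in $n$,'' i.e.\ that the pushforward measures stay in a weak-$*$ compact family that is uniformly nondegenerate for the barycenter construction. That implication is false. All the four marked points give you is that each of the four image arcs $(h_n(-1),0)$, $(0,1)$, $(1,\infty)$, $(\infty,h_n(-1))$ carries pushforward mass exactly $1/4$ (the harmonic measure at $i$ of its preimage arc). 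Nothing prevents $h_n$ from collapsing the mass of two adjacent arcs onto their common endpoint: for instance $h_n(x)=\mathrm{sign}(x)\,|x|^{1/n}$ fixes $0$, $\pm1$ and $\infty$, yet $(h_n)_*\eta_i$ converges weak-$*$ to $\tfrac12\delta_{1}+\tfrac12\delta_{-1}$, a measure supported on only two points. A limit of the form $\tfrac12\delta_p+\tfrac12\delta_q$ is exactly the degenerate case for the conformal barycenter: the vector field $\xi_\nu(w)=\int\frac{\zeta-w}{1-\bar{w}\zeta}\,d\nu(\zeta)$ vanishes along the entire geodesic from $p$ to $q$, so its zero is not isolated and the linearization in which you propose to ``pass to the limit'' becomes singular (concretely, $\frac{1}{2\pi}\int h_n^2$ tends to a unimodular constant, so the denominator $1-|F_{\bar{w}}|^2$ in the implicit-function computation of $\mu$ tends to $0$). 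Four controlled points is precisely the borderline at which half the mass can concentrate at a single point, so your compactness scheme cannot by itself deliver a uniform bound $c<1$.

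To close the gap you must either quote the quantitative estimate of Markovic's Lemma 3.6 as the paper does (and you should check its exact hypotheses, since the four-point configuration sits right at the threshold just described), or else use the stronger information available at the point where the lemma is applied: there the maps satisfy the uniform quasisymmetry-type inequality on the whole fan with tip $\infty$, which controls the images of all integers rather than just $-1,0,1,\infty$ and genuinely does force uniform non-concentration of the pushed-forward harmonic measure. Two smaller corrections: the negation of the conclusion should be organized as a diagonal extraction of points $z_k\to i$ with $|\mu_{n_k}(z_k)|\to 1$, not as ``$\|\mu_n\|_{L^\infty(U)}\to 1$ for every $U$''; and the barycentric extension sends $i$ to the conformal barycenter of $(h_n)_*\eta_i$, not the barycenter to $i$.
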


\begin{proof}
We note that the angle distance with respect to $i\in\H$ between all pairs of consecutive points in
$\{ \infty ,-1,0,1\}\subset \hat{\R}$ is bounded below by a constant less than $\pi$ and bounded
above by $\pi$. Then \cite[Lemma 3.6]{Mar} directly implies the desired
conclusion.
\end{proof}

\section{The Farey tesselation and the shear map}

Let $\Delta_0$ be an ideal geodesic triangle in $\H$ with vertices
$0$, $1$ and $\infty$. Let $\Gamma$ be the group generated by
hyperbolic reflections in the sides of $\Delta_0$. The Farey
tesselation $\F$ is an ideal triangulation of $\H$ which is the
$\Gamma$-orbit of the boundary sides of $\Delta_0$. In other words,
each edge in $\F$ is obtained by applying finitely many inversions
in the sides $\Delta_0$ to an edge of $\Delta_0$ (see, for example,
\cite{Pe}). The set of endpoints of the edges in $\F$ is
$\hat{\mathbf{Q}}=\mathbf{Q}\cup\{\infty\}$.

\vskip .2 cm

We define {\it Farey generation} of edges of $\F$ as follows.
A boundary edge of $\Delta_0$ has Farey generation $0$. If
a boundary edge of $\F$ is obtained by $n$ reflections of an edge of
generation $0$
(where $n$ is the smallest such number) then its Farey generation
is $n$.

\vskip .2 cm

Let $(\Delta_1,\Delta_2)$ be a pair of ideal triangles in $\H$ with
disjoint interiors and a common boundary side. Let $A\in PSL_2(\R )$
be the unique M\"obius map that sends  $\Delta_1$ onto the triangle with
vertices $-1$, $0$ and $\infty$, and that sends the common boundary
side of $(\Delta_1,\Delta_2)$ onto the geodesic with vertices $0$
and $\infty$. Then $A(\Delta_2)$ has vertices $0$, $e^r$ and
$\infty$ for some $r\in\R$. The {\it shear} of the pair of triangles
$(\Delta_1,\Delta_2)$ is by definition equal to $r$.
Alternatively, the shear of a pair $(\Delta_1,\Delta_2)$ of adjacent
triangles is the signed distance of the projections onto common
boundary side $e$ of vertices of $\Delta_1$ and $\Delta_2$ opposite
$e$, where $e$ is oriented to the left as seen from $\Delta_1$. Note
that the shear of $(\Delta_1,\Delta_2)$ is equal to the shear of
$(\Delta_2,\Delta_1)$. For example, any two adjacent triangles in the
complement of the Farey tesselation $\F$ have shear $0$.

\vskip .2 cm

Let $h:\hat{\R}\to\hat{\R}$ be a homeomorphism. Every geodesic of
$\H$ has exactly two distinct ideal endpoints on $\hat{\R}$ and,
conversely every two points on $\hat{\R}$ determine a geodesic in
$\H$. Thus, the space $\G$ of (oriented) geodesics in $\H$ is
identified with the set of pairs of distinct points in $\hat{\R}$.
Therefore, the homeomorphism $h:\hat{\R}\to\hat{\R}$ extends to a
homeomorphism $h:\G\to\G$ of the space of geodesics $\G$. In
particular, $h(\F )$ is an ideal triangulation of $\H$ whose
complementary triangles are $h(\Gamma (\Delta_0))$.

\begin{definition}
Let $h:\hat{\R}\to\hat{\R}$ be a homeomorphism. An edge $e\in\F$ is
on the boundary of exactly two complementary triangles $\Delta_1,\Delta_2$.
Then we assign to $e\in\F$ the shear of the pair $(
h(\Delta_1),h(\Delta_2))$ of triangles in $h(\Gamma (\Delta_0))$. This determines
a map
$$s_h:\F\to\R$$ which is called the {\it shear map} of $h$.
\end{definition}

If we are given a shear between two adjacent triangles and the
position of one of the triangles, the other triangle is uniquely
determined. More generally, a pair of adjacent triangles with an
assigned shear is determined up to a M\"obius map because any
ideal hyperbolic triangle can be mapped onto any other ideal
hyperbolic triangle by a M\"obius map.

\vskip .2 cm

If $h:\hat{\R}\to\hat{\R}$ fixes $0$, $1$ and $\infty$, then it is
uniquely determined by the shear map $s_h:\F\to\R$. Given a shear
map $s:\F\to\R$ there exists a unique injective map $h_s$ from the
vertices $\hat{\Q}\subset\hat{\R}$ of the Farey tesselation $\F$
into $\hat{\R}$ such that $h_s$ fixes $0$, $1$ and $\infty$. The map
$h_s$ realizes the shear map $s$ and it is called a {\it characteristic map}
of $s$ (see \cite{Pe} or next section for its definition).

\section{Homeomorphisms and shears}

We characterize shear maps $s:\F\to\R$ whose characteristic maps
continuously extend to homeomorphisms
of $\hat{\R}$. An arbitrary map $s:\F\to\R$ induces a cocycle map $H_s:\H\to\H$
which is piecewise M\"obius as follows. Let $H_s|_{\Delta_0}=id$.
For any other complementary triangle $\Delta\in\Gamma (\Delta_0)$, let $l$ be the geodesic arc
connecting the center of $\Delta_0$ to the center of $\Delta$. Let
$\{ e_1,e_2,\ldots ,e_n\}$ be the edges in $\F$ which intersect $l$
in the given order such that $e_1$ is a boundary side of $\Delta_0$
and $e_n$ is a boundary side of $\Delta$. We orient $e_i$ to the
left as seen from $\Delta_0$. Then we set
$H_s|_{\Delta}=T^{s(e_1)}_{e_1}\circ
T^{s(e_{2})}_{e_{2}}\circ\cdots \circ T^{s(e_n)}_{e_n}$, where
$T^{s(e_i)}_{e_i}$ is the hyperbolic translation with the oriented
axis $e_i$ and the signed translation length $s(e_i)$. The map $H_s$
is not well-defined on the edges $\F$ since each edge $e$ is on the
boundary of exactly two complementary triangles $\Delta_e^1$ and $\Delta_e^2$.
We choose $H_s|_e$ to be either $H_s|_{\Delta_e^1}$ or
$H_s|_{\Delta_e^2}$. The cocycle map $H_s$ preserves separation
properties of the triples of complementary triangles of $\F$. Therefore, $H_s$ extends to a
monotone map $h_s:\hat{\mathbf{Q}}\to\hat{\R}$ which is called {\it characteristic
map} of $s:\F\to\R$ (see Penner \cite{Pe}).

\begin{proposition} With the above notation, the characteristic map
$h_s:\hat{\mathbf{Q}}\to\hat{\R}$ extends by continuity to a
homeomorphism of $\hat{\R}$ if and only if $H_s:\H\to\H$ is
surjective.
\end{proposition}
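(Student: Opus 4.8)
The plan is to prove both directions by analyzing how the cocycle map $H_s$ relates to the monotone boundary map $h_s$ and its image. The characteristic map $h_s:\hat{\Q}\to\hat{\R}$ is monotone and injective, so it automatically extends to a monotone map of $\hat{\R}$; the only question is whether that extension is a genuine homeomorphism, i.e. whether it is continuous and surjective. The key observation is that the image $h_s(\hat{\Q})$ may fail to be dense in $\hat{\R}$: there can be ``gaps'' in the image, and such gaps are precisely the obstruction to $h_s$ being a homeomorphism. I would make the link between these gaps in the boundary and the failure of surjectivity of $H_s$ in the interior of $\H$.

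First I would set up the correspondence between complementary triangles of $\F$ and their images under $H_s$. Each triangle $\Delta\in\Gamma(\Delta_0)$ has three ideal vertices in $\hat{\Q}$, and $H_s|_\Delta$ is a M\"obius map carrying $\Delta$ to an ideal triangle $H_s(\Delta)$ whose vertices are the $h_s$-images of the vertices of $\Delta$. Thus the collection $\{H_s(\Delta)\}$ tiles a subregion of $\H$, and the union $\bigcup_\Delta H_s(\Delta)$ is exactly the convex-hull-type region bounded by the geodesics spanning the image points $h_s(\hat{\Q})$. The cocycle property (that the definitions on adjacent triangles agree along shared edges up to the prescribed shear) guarantees these image triangles fit together with disjoint interiors and matching edges, so $H_s$ is a local homeomorphism onto its image, and its image is an open subset of $\H$ that is a union of ideal triangles.

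Next I would show the equivalence: $H_s$ is surjective if and only if $\bigcup_\Delta H_s(\Delta)=\H$, which happens if and only if the set of image vertices $h_s(\hat{\Q})$ is dense in $\hat{\R}$, which in turn is equivalent to $h_s$ extending continuously to a surjective (hence, by monotonicity, bijective and bicontinuous) map of $\hat{\R}$. For the forward direction, if $H_s$ is surjective then no gap can occur in the image on the boundary: a gap would be an interval $(a,b)\subset\hat{\R}\setminus\overline{h_s(\hat{\Q})}$, and the geodesic with endpoints $a,b$ together with the region it cuts off would lie outside every $H_s(\Delta)$, contradicting surjectivity. Hence $h_s(\hat{\Q})$ is dense, forcing the monotone extension to be continuous and onto. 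Conversely, if $h_s$ extends to a homeomorphism of $\hat{\R}$, then the images $h_s(\hat{\Q})$ are dense, the ideal triangles $H_s(\Delta)$ exhaust $\H$ since they triangulate the hyperbolic convex hull of a dense subset of $\hat{\R}$, and therefore $H_s$ is surjective.

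The hard part will be controlling the global behavior of the infinitely-many M\"obius pieces and ruling out pathologies where the image triangles could ``accumulate'' or ``fold back'' so that surjectivity fails even without a boundary gap. Specifically, I must verify that the image triangles $H_s(\Delta)$ have pairwise disjoint interiors globally (not just for adjacent pairs) and that their union is genuinely an increasing exhaustion as $\Delta$ ranges over triangles farther from $\Delta_0$; this requires the cocycle/separation property stated in the text, namely that $H_s$ preserves the separation order of triples of complementary triangles. Granting that separation property, the combinatorial nerve of $\{H_s(\Delta)\}$ matches that of $\F$, the image is a locally finite ideal triangulation of a convex region, and the dichotomy ``the region is all of $\H$'' versus ``the region is bounded by limiting geodesics corresponding to boundary gaps'' gives exactly the stated equivalence.
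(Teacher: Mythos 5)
Your proposal is correct and follows essentially the same route as the paper: both arguments identify the obstruction to extending $h_s$ with a half-plane (a ``gap'') missed by the image of $H_s$, and both rely on the stated separation-preserving property of $H_s$ to ensure the image triangles fit together so that non-surjectivity is equivalent to the existence of such a gap, i.e.\ to non-density of $h_s(\hat{\mathbf{Q}})$ in $\hat{\R}$. The paper phrases the sufficiency direction via nested half-planes $H_s(P_i)$ shrinking to a boundary point rather than via exhaustion of the convex hull, but this is only a cosmetic difference.
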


\begin{proof}
Since $h_s:\hat{\mathbf{Q}}\to\hat{\R}$ is order preserving on the
dense subset $\hat{\mathbf{Q}}$ of $\hat{\R}\equiv S^1$, it follows
that if $h_s$ can be extended to a continuous map on $\hat{\R}$ then
the extension is a homeomorphism.

\vskip .2 cm

If $H_s:\H\to\H$ is not onto, then there exists a maximal half-plane
$P$ not contained in $H_s(\H )$. It follows that the image
$h_s(\hat{\mathbf{Q}})$ does not intersect the interior of the
interval on $\hat{\R}$ which is the boundary at infinity of $P$.
Therefore, the map $h_s:\hat{\mathbf{Q}}\to\hat{\R}$ cannot be
extended to a homeomorphism of $\hat{\R}$.

\vskip .2 cm

Assume that $H_s:\H\to\H$ is onto. Let
$x\in\hat{\R}\setminus\hat{\mathbf{Q}}$. We need to show that $h_s$
extends to $x$. Let $P_i$ be a decreasing sequence of half-planes
with boundary sides $e_i\in\F$ that accumulate at $x$, namely
$\bigcap_i\overline{P_i}=x$. Since $H_s$ is order preserving on
triples of complementary triangles of $\F$, it follows that $H_s(P_i)$ is a
decreasing sequence of half-planes. If
$\bigcap_iH_s(P_i)\neq\emptyset$ then $H_s(\H )\neq \H$, namely
$H_s(\H )\cap (\bigcap_iH_s(P_i))=\emptyset$. Thus
$\bigcap_iH_s(P_i)=\emptyset$ and $\bigcap_i\overline{H_s(P_i)}$ is
a single point $y\in\hat{\R}$. Then $h_s$ extends to $x$ by
continuity such that $h_s(x)=y$.
\end{proof}

\vskip .2 cm

\noindent {\it Proof of Theorem C.} Using the above proposition we
determine which shear maps induce homeomorphisms of $\hat{\R}$.
Assume that $H_s:\H\to\H$ is not onto. Then there exists a maximal
half-plane $P$ of $\H$ which is not in the image of $H_s$. Let $l$
be the boundary geodesic of the half-plane $P$. Then there exists a
chain $e_n\in\F$ such that $H_s(e_n)\to l$ as $n\to\infty$. There
are two possibilities for the sequence $e_n$. Either all $e_n$'s
share a common endpoint $x\in\hat{\Q}\subset\hat{\R}$ for $n\geq
n_0$ namely the sub-chain $e_n$, for $n\geq n_0$, is a part of a
single fan, or $e_n$'s accumulate to a point
$x\in\hat{\R}\setminus\hat{\Q}$ (which is equivalent to saying that
no infinite subsequence of $e_n$'s shares a common endpoint i.e. no
tail of $e_n$'s is a part of a single fan). In both cases the
existence of the half-plane $P$ is equivalent to the statement that
$h_s$ does not extend to a continuous map at $x\in\hat{\R}$.

\vskip .2 cm

Assume that we are in the first case. By pre-composition with an element
of $PSL_2(\Z )$, we can assume that $x=\infty$. In addition, we can
assume that $H_s$ fixes $0$, $1$ and $\infty$ by post-composing with
an element of $PSL_2(\R )$. Then $l$ has one endpoint $x=\infty$ and
the other endpoint $\bar{y}\in\R$ with either $\bar{y}>1$ or
$\bar{y}<0$. If $\bar{y}>1$, then
\begin{equation} \label{finite1} \bar{y}=1+\sum_{n=1}^{\infty}e^{s(e_1)+\cdots +s(e_n)},
\end{equation} where
$e_i\in \F$ is a geodesic with endpoints $i$ and
$\infty$ for $i\in\mathbf{N}$. If $\bar{y}<0$ then
\begin{equation} \label{finite2}
\bar{y}=-\sum_{n=0}^{-\infty}e^{-s(e_0)-\cdots -s(e_n)},
\end{equation} where
$e_i\in\mathcal{F}$ is a geodesic with endpoints
$i$ and $\infty$ for $i\in\mathbf{Z}^{-}\cup\{ 0\}$. Since $e_i$'s belong
to a single fan, the number of times we change fans from $e_{i}$ to $e_{n+1}$
is zero. Thus $s_i^n=s(e_i)$ for $i>0$ and $s_i^n=-s(e_i)$ for $i\leq 0$.
Therefore, $h_s$ is continuous at $x\in\hat{\R}$ if and only if
the series in (\ref{finite1}) and the series in (\ref{finite2}) diverge.

\vskip .2 cm

Assume now that we are in the second case. Namely, the chain
$e_n$ does not have a subsequence which shares a common endpoint
and $e_n$'s accumulate at $x\in\hat{\R}\setminus\hat{\Q}$. In other words,
no tail of $e_n$'s is in a single fan. The part of $\H$ bounded by $e_n$ and $e_{n+1}$ is
called a {\it hyperbolic wedge}.

\vskip .2 cm

Given a hyperbolic wedge, there is a unique foliation of the wedge
by horocyclic arcs which lie on horocycles with centers at the
common endpoint of the two boundary geodesics of the wedge. Consider
the wedges whose boundaries are the adjacent geodesics in the chain
$h_s(e_n)$ and foliate each wedge by horocyclic arcs as above. Fix a
point $P_1\in h_s(e_1)$ and denote by $l(P_1)$ the leaf of the
horocyclic foliation of the union of wedges that starts at $P_1$.
Let $W_n$ be the hyperbolic wedge bounded by $h_s(e_n)$ and
$h_s(e_{n+1})$. We choose $P_1$ such that the length of $l(P_1)\cap
W_1$ is $e^{s_1^1}$, where $s^1_1=s(e_1)$ if $e_1<e_2$, otherwise
$s^1_1=-s(e_1)$ (see Figure \ref{fig1}).

\begin{figure}
\centering
\includegraphics[scale=0.5]{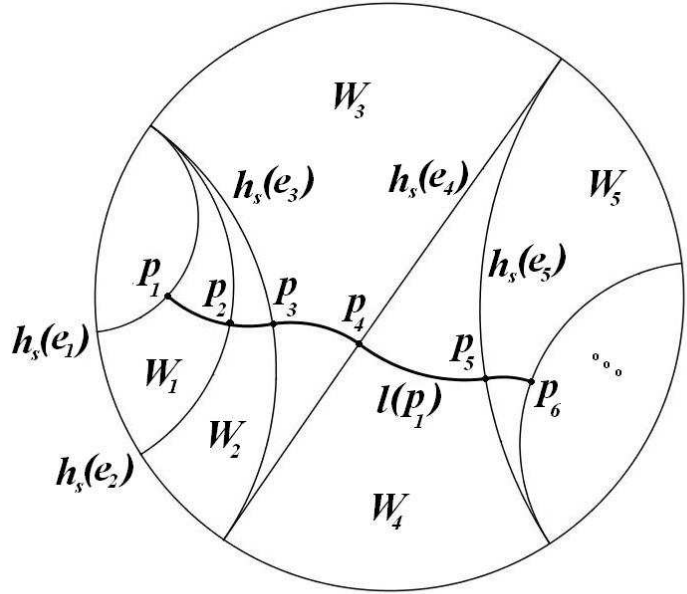}
\caption{The leaf $l(P_1)$ of the foliation of $\cup_{n}W_n$ by
horocycles.} \label{fig1}
\end{figure}

\begin{proposition} Under the above notation,
the map $h_s$ continuously extends to
$x\in\hat{\R}\setminus\hat{\mathbf{Q}}$ if and only if the leaf
$l(P_1)$ is of infinite length.
\end{proposition}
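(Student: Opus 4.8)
The plan is to read off the geometric dichotomy from the preceding proposition and then translate it into a statement about the length of $l(P_1)$. By that proposition $h_s$ extends continuously at $x$ precisely when $H_s$ is onto, and the failure of surjectivity is detected by the nested decreasing half-planes $H_s(P_n)$ whose boundary geodesics are $h_s(e_n)$. Writing $I_n\subset\hat{\R}$ for the boundary interval of $H_s(P_n)$, the $I_n$ decrease and there are exactly two possibilities: either $\bigcap_n \overline{I_n}=\{y\}$ is a single point, in which case $h_s$ extends with $h_s(x)=y$, or $\bigcap_n \overline{I_n}=[y_-,y_+]$ is nondegenerate, in which case the half-plane $P$ bounded by the geodesic $l$ with endpoints $y_\pm$ is omitted by $H_s$ and $h_s$ does not extend at $x$. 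Since each wedge $W_n$ lies in $H_s(P_n)$ and $l(P_1)$ meets every $W_n$, it suffices to prove that the first alternative holds if and only if $l(P_1)$ has infinite length.

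For the easy direction I would argue that continuity forces infinite length. If $\bigcap_n \overline{I_n}=\{y\}$ then $\overline{H_s(P_n)}\to\{y\}$ in $\overline{\H}$, so the hyperbolic distance from $W_n\subset H_s(P_n)$ to the basepoint $i$ tends to $\infty$. As $l(P_1)$ passes through every $W_n$, the leaf eventually leaves every compact subset of $\H$; since $\hat{\R}$ lies at infinite hyperbolic distance, such a curve has infinite length, so $l(P_1)$ is infinite.

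For the converse I would show that the gap case forces finite length. After normalizing by a M\"obius map so that $l$ is the imaginary axis and $P$ is the left half-plane, the geodesics $h_s(e_n)$ converge to $l$ from the right, their endpoints converge to $0$ and $\infty$, and the wedges $W_n$ are thin slivers collapsing onto $l$. I would then estimate the crossing length $|l(P_1)\cap W_n|$ and prove $\sum_n|l(P_1)\cap W_n|<\infty$, so that $l(P_1)$ terminates at an interior point of $l$ with finite total length. The model to keep in mind is the single-fan computation behind (\ref{finite1}) and (\ref{finite2}): there $x=\infty$ is the common tip, $l(P_1)$ is a single horizontal horocycle, and its finite length $(\bar y-h_s(1))/c$ is exactly the convergent sum $\sum_n e^{s(e_1)+\cdots+s(e_n)}$ recording that the limiting vertical geodesic is met transversally at finite height. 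The general chain is handled by running this bookkeeping wedge by wedge.

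The hard part will be precisely this converse gap-case estimate. Unlike the single-fan situation, the horocyclic centers $v_n$ of the successive arcs move (there are infinitely many fan changes), so $l(P_1)$ is a genuinely piecewise-horocyclic curve rather than a single horocycle, and one must control how the pieces fit together across each $h_s(e_{n+1})$. The delicate point to exclude is that $l(P_1)$ might run off toward one of the endpoints $y_\pm$ of $l$, which would make it infinite despite the presence of a gap; I would rule this out by using that the centers $v_n$ converge to the endpoints of $l$ while the transverse widths of the slivers $W_n$ remain summable, forcing the leaf to meet $l$ transversally at finite height and hence to have finite length.
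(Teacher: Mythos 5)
Your overall structure matches the paper's: the forward direction (continuity forces infinite length because the leaf must leave every compact set) is the same short argument the paper gives, and for the converse you correctly reduce to showing that when the chain $h_s(e_n)$ accumulates on a geodesic $l$ the leaf $l(P_1)$ has finite length. But the converse is exactly where your argument stops being a proof. You write that you ``would estimate the crossing length $|l(P_1)\cap W_n|$ and prove $\sum_n|l(P_1)\cap W_n|<\infty$,'' and you even identify the danger correctly --- the leaf could drift toward an endpoint $y_\pm$ of $l$ and be infinite despite the gap --- but the mechanism you offer to exclude this (``the transverse widths of the slivers $W_n$ remain summable, forcing the leaf to meet $l$ transversally at finite height'') does not follow. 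Summability of the widths of the wedges controls only how far apart consecutive geodesics $h_s(e_n)$ are; the length of $l(P_1)\cap W_n$ also depends on \emph{where along} $h_s(e_n)$ the leaf crosses, and that crossing point can a priori run off to an endpoint of $l$ even while the widths shrink. That drift is the whole content of the hard direction, and your proposal leaves it unproved.

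The paper closes this gap with a concrete device you do not have: take the common perpendicular $a$ from $h_s(e_1)$ to the limit geodesic $g$, note that every $h_s(e_n)$ crosses $a$ at an angle bounded away from $0$, and let $d_n$ be the distance along $h_s(e_n)$ between the leaf's crossing point $P_n$ and the perpendicular's crossing point $P_n'$. The hyperbolic sine rule in the triangle $P_{n-1}'P_n'P_n''$ gives $|d_n'|\le C|a_{n-1}|$ and the horocyclic construction gives $|d_n|\le|d_{n-1}|+|d_n'|$, so $\sum_n|d_n|\le|d_1|+C\sum_n|a_n|=C_1|a|<\infty$: the leaf stays a bounded distance from $a$, hence $|l(P_1)\cap W_n|$ is comparable to $|a_n|$ and the total length is comparable to $|a|<\infty$. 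Some quantitative recursion of this kind (controlling the drift of $P_n$ relative to a fixed transversal) is indispensable; without it your converse is a plausible plan rather than an argument.
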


\begin{proof}
Note that $h_s$ extends by continuity to $x\in\hat{\R}$ if and
only if $h_s(e_n)$ do not accumulate in $\H$.

\vskip .2 cm

Assume that $h_s$ extends continuously to $x\in\hat{\R}$. Then
$h_s(e_n)$ do not accumulate in $\H$. Therefore, the arc $l(P_1)$
accumulates at $\partial\H$ and it is necessarily of infinite
length.

\vskip .2 cm

It remains to show that if $l(P_1)$ is of infinite length then $h_s$
extends to $x\in\hat{\R}$ by continuity. Assume on the contrary
that $h_s$ does not extend to $x\in\hat{\R}$. This implies that
$h_s(e_n)$ accumulate at a geodesic $g\subset\H$. We need to show
that $l(P_1)$ has finite length.

\vskip .2 cm

Let $a$ be the geodesic arc which connects $h_s(e_1)$ with $g$ and
that is orthogonal to both $h_s(e_1)$ and $g$. All the geodesics
of the sequence $h_s(e_n)$ for $n\geq 2$ lie between $h_s(e_1)$
and $g$, and they intersect $a$. The angle of the intersection
between $h_s(e_n)$ and $a$ is necessarily bounded away from $0$.
We show that the length of $l(P_1)$ is comparable to the length of
$a$ which finishes the proof.

\vskip .2 cm

Consider a hyperbolic wedge $W_n$ bounded with $h_s(e_n)$ and
$h_s(e_{n+1})$. Let $a_n=a\cap W_n$, and let $P'_n=a\cap h_s(e_n)$.
Then $P'_n$ and $P'_{n+1}$ are the endpoints of $a_n$. Let
$P_n=l(P_1)\cap h_s(e_{n})$ and let $P_n''$ be the endpoint of the
horocyclic arc in the wedge $W_{n-1}$ whose initial point is
$P_{n-1}'$ (see Figure \ref{fig2}). Let $d_n$ be the geodesic arc
with endpoints $P_n$ and $P_n'$, and let $d_n'$ be the geodesic arc
with endpoints $P_n'$ and $P_n''$. Consider the hyperbolic triangle
with vertices $P_{n-1}'$, $P_n'$ and $P_n''$. Since the angle at
$P_n''$ is bounded away from $0$ (by the uniform bound on the length
of each $a_n$), it follows from the hyperbolic sine formula that
there exists $C>0$ such that $|d_n'|\leq C\cdot |a_{n-1}|$, where
$|d_n'|, |a_n|$ are the lengths of $d_n',a_n$, respectively. In
addition, $|d_n|\leq |d_{n-1}|+|d_n'|$ follows by the definition of
$l(P_1)$.

\begin{figure}
\centering
\includegraphics[scale=0.35]{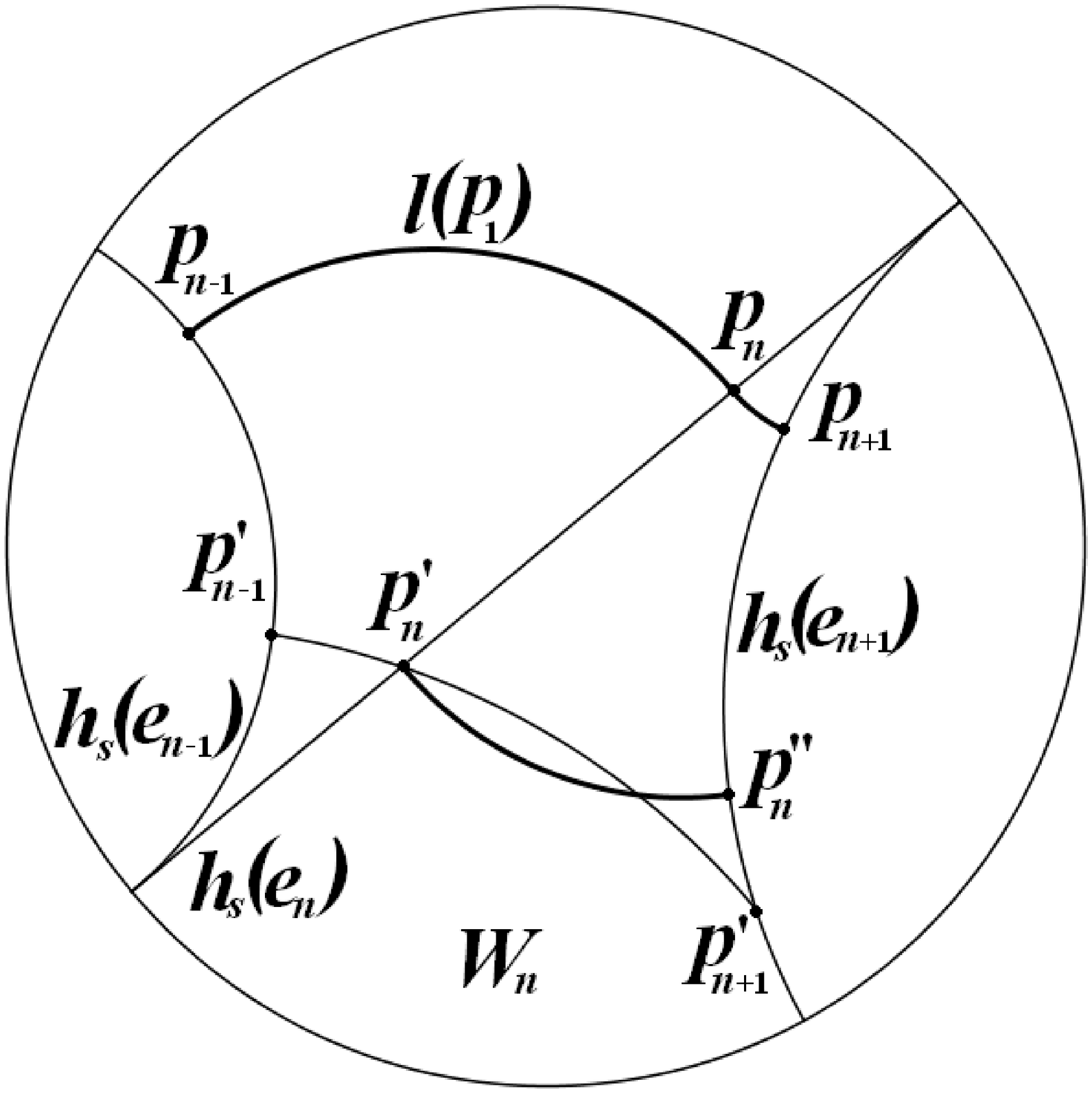}
\caption{The points $P_n$, $P_n'$ and $P_n''$.} \label{fig2}
\end{figure}

\vskip .2 cm

The above two estimates show that $\sum_{n\in\mathbf{N}}|d_n|\leq
|d_1|+C\sum_{n\in\mathbf{N}}|a_n|=C_1|a|<\infty$. This implies that $l(P_1)$
stays a bounded distance from $a$. Thus the length of $l(P_1)\cap
W_n$ and the length $a_n$ are comparable to a multiplicative
constant. Therefore $l(P_1)$ has finite length.
\end{proof}

\vskip .2 cm

We use the above proposition to find a condition on the shear map
$s$ such that $h_s$ has continuous extension to $x$. We compute the
length of the above leaf $l(P_1)$ in terms of the shear map
$s:\F\to\R$. Let $l_n$ be the length of the horocyclic arc $l(P_1)\cap W_n$ in the
wedge $W_n$ between $H_s(e_n)$ and $H_s(e_{n+1})$. If
$e_n$, $e_{n+1}$ and $e_{n+2}$ share a common endpoint, then an
elementary hyperbolic geometry and the definition of $H_s$ show that
the length of $l(P_1)\cap W_{n+1}$ in the wedge $W_{n+1}$ bounded by $H_s(e_{n+1})$
and $H_s(e_{n+2})$ is $l_ne^{s(e_{n+1})}$ if $e_{n+1}<e_{n+2}$,
and the length is $l_ne^{-s(e_{n+1})}$ if $e_{n+2}<e_{n+1}$. If $e_n$, $e_{n+1}$ and
$e_{n+2}$ do not share a common endpoint, then the length of $l(P_1)\cap W_{n+1}$
in the wedge $W_{n+1}$ between $H_s(e_{n+1})$ and $H_s(e_{n+2})$ is
$l_n^{-1}e^{s(e_{n+1})}$ if $e_{n+1}<e_{n+2}$, and the length is
$l_n^{-1}e^{-s(e_{n+1})}$ if $e_{n+1}>e_{n+2}$. We choose $P_1\in H_s(e_1)$
such that $l_1=e^{s_1^1}$.

\vskip .2 cm

We show that $l_n=e^{s_1^n+s_2^n+\cdots +s_n^n}$ by induction which
finishes the proof. Note that the choice of $P_1\in H_s(e_1)$ is
such that $l_1=e^{s_1^1}$. Assume that $l_n=e^{s_1^n+s_2^n+\cdots
+s_n^n}$ and we need to show that
$l_{n+1}=e^{s_1^{n+1}+s_2^{n+1}+\cdots +s_{n+1}^{n+1}}$. We consider
four possibilities and argue each separately. Assume first that
$e_n$, $e_{n+1}$ and $e_{n+2}$ share a common endpoint and that
$e_{n+1}<e_{n+2}$. Then $l_{n+1}=l_ne^{s(e_{n+1})}=e^{s_1^n+\cdots
+s_n^n+s_{n+1}^{n+1}}$. Since there is no additional change of fans
from $e_{n+1}$ to $e_{n+2}$, we have $s_i^n=s_i^{n+1}$ for
$i=1,2,\ldots ,n$. This proves the formula in this case. The second
case is when $e_n$, $e_{n+1}$ and $e_{n+2}$ share a common endpoint
and $e_{n+2}<e_{n+1}$. Then we have
$l_{n+1}=l_ne^{-s(e_{n+1})}=l_ne^{s_{n+1}^{n+1}}$ by the definition
of $s_{n+1}^{n+1}$. The desired formula follows as in the previous
case. In the third case we assume that $e_n$, $e_{n+1}$ and
$e_{n+2}$ do not share a common endpoint and that $e_{n+1}<e_{n+2}$.
Then $l_{n+1}=l_n^{-1}e^{s(e_{n+1})}=e^{-s_1^n-\cdots
-s_n^n+s_{n+1}^{n+1}}$. Since we have one additional change of fan
from $e_{n+1}$ to $e_{n+2}$, we get that $s_i^{n+1}=-s_i^n$ for
$i=1,2,\ldots ,n$. This proves the formula in the third case.
Finally, we assume that $e_n$, $e_{n+1}$ and $e_{n+2}$ do not share
a common endpoint and that $e_{n+2}<e_{n+1}$. Then
$l_{n+1}=l_n^{-1}e^{-s(e_{n+1})}=e^{-s_1^n-\cdots
-s_n^n+s_{n+1}^{n+1}}$. As in the previous case this gives the
desired formula. Therefore the series
$\sum_{n=1}^{\infty}e^{s_1^n+\cdots +s_n^n}$ is the length of
$l(P_1)$ and the proof of Theorem C is completed.
 $\Box$

\section{Quasisymmetric maps and shears}

In this section we characterize shear maps which give rise to
quasisymmetric maps of $\hat{\R}$. This is the main result of the
paper and, to our best knowledge, it gives the only known
parametrization of the universal Teichm\"uller space $T(\H )$.

\vskip .2 cm

\noindent {\it Proof of the first part of Theorem A.} We prove that
the first condition in the theorem is necessary for $s:\F\to\R$ to
be a shear map of a quasisymmetric map $h:\hat{\R}\to\hat{\R}$.

\vskip .2 cm

Consider a fan of $\F$ with tip $p\in\hat{\Q}$.
Let $A\in PSL_2(\Z )$ be such that $A(p)=\infty$. Let $B\in
PSL_2(\R )$ be such that $B(h(p))=\infty$. Then $B\circ h\circ
A^{-1}$ fixes $\infty$ and the corresponding shear map is $s\circ
A^{-1}$. Moreover, if $h$ is $M_1$-quasisymmetric then  $B\circ
h\circ A^{-1}$ is $M$-quasisymmetric, where $M$ is a function of
$M_1$ and is independent of $A$ and $B$. Therefore, we can study
properties of a shear map on a single fan of $\F$ with tip $p$ by studying
properties on the fan of $\F$ with tip $\infty$.

\vskip .2 cm

Consider an $M$-quasisymmetric map $h$ of $\hat{\R}$ which fixes
$\infty$ and let $s:\F\to\R$ be the induced shear map. Then $h$
satisfies
\begin{equation}
\label{quasisymmetry} \frac{1}{M}\leq
\frac{h(n+k)-h(n)}{h(n)-h(n-k)}\leq M
\end{equation}
for all $n\in\Z$ and all $k\in\N$. This is the $M$-quasisymmetric
condition taken at special symmetric triples in $\Z\subset\R$. We
can further normalize $h$ by post-composing with an affine map such
that it fixes $n$, $n+1$ and $\infty$. The values at $\Z$ of such a
normalized $h$ are uniquely determined by shears on the fan of $\F$
with tip $\infty$ by the definition of the characteristic map.

\vskip .2 cm

Let $e_n$ be the geodesic with endpoints $n$ and $\infty$, and let $s_n=s(e_n)$
for the convenience of notation. The condition (\ref{quasisymmetry})
is equivalent to
\begin{equation}
\label{shear_fan_qs} \frac{1}{M}\leq \frac{1+e^{s_{n+1}}+\cdots
+e^{s_{n+1}+s_{n+2}+\cdots
+s_{n+k-1}}}{e^{-s_{n}}+e^{-s_{n}-s_{n-1}}+\cdots
+e^{-s_{n}-s_{n-1}-\cdots -s_{n-k+1}}}\leq M.
\end{equation}

The condition (\ref{shear_fan_qs}) is equivalent to the first
condition in Theorem A and this establishes the necessity of the first
condition in Theorem A.

\vskip .3 cm

We assume that a shear map $s:\F\to\R$ satisfies property
(\ref{shear_fan_qs}) at each fan of $\F$ and show that characteristic map
$h_s:\hat{\mathbf{Q}}\to\hat{\R}$  extends to a quasisymmetric map of $\hat{\R}$.

\vskip .2 cm

We first show that $h_s:\hat{\Q}\to\hat{\R}$ extends to a
homeomorphism of $\hat{\R}$. Since $h_s$ is a strictly monotone
map of $\hat{\Q}$ into $\hat{\R}$, it is enough to show that
$h_s(\hat{\Q})$ is dense in $\hat{\R}$. Assume on the contrary that
$\hat{\R}\setminus h_s(\hat{\Q})$ contains an interval $I$. Assume
that $I$ is a maximal such interval and let $l$ be the geodesic in
$\H$ with endpoints equal to the endpoints of $I$. There are two
possibilities to consider. Either $h_s(\hat{\Q})$ contains exactly
one endpoint of $I$ or both endpoints of $I$ do not lie in
$h_s(\hat{\Q})$.

\vskip .2 cm

In the former case, the interval $I$ has an endpoint $h_s(p)$ for
some $p\in\hat{\Q}$. This implies that the image of the fan at $p$
under $h_s$ accumulates to the geodesic $l\in\H$. Let $C$ be a
horocycle based at $p$. Fix a single geodesic in the fan at
$h_s(p)$. Then the sum of lengths of consecutive arcs of $C$ cut out
by the geodesics in the fan at $h_s(p)$ which accumulate to $l$
starting from the fixed geodesic in the fan is finite. This implies
that there exists a sequence of $2n$ consecutive arcs on $C$ such
that the ratio of the length of left $n$ consecutive arcs to the length of the right $n$
consecutive arcs is converging to $\infty$.
Consequently, the condition (\ref{shear_fan_qs}) fails at the fan with tip
$p$ which is a contradiction.

\vskip .2 cm

In the later case, there is a sequence $e_n\in\F$ such that
$h_s(e_n)$ converges to $l$ and that no $h_s(e_n)$ shares an
endpoint with $l$. Moreover, we can assume that each $e_n$ shares
one endpoint with $e_{n+1}$ namely $\{ e_n\}$ is a chain. We exhibit
a sequence of pairs of adjacent triangles in $h(\F )$ with shears
converging to $0$ or to $\infty$ which again contradicts condition
(\ref{shear_fan_qs}). Let $e_{n_0}$ be such that $h_s(e_{n_0})$ is
close to $l$. Then $e_{n_0+1}$ shares an endpoint with $e_{n_0}$.
Let $e_{n_0+k}$ be the edge in the sequence $\{ e_n\}$ with largest
index which shares an endpoint with $e_{n_0}$. Then $e_{n_0+k+1}$
does not share an endpoint with $e_{n_0+k-1}$ (see Figure
\ref{fig3}). We consider the two adjacent triangle in $\F$ with
common boundary edge $e_{n_0+k}$. The image of the two triangles
under $h_s$ has sides $h_s(e_{n_0+k-1})$, $h_s(e_{n_0+k})$ and
$h_s(e_{n_0+k+1})$ close to the geodesic $l$. This implies that the
other two sides are small in the Euclidean sense. Thus the shear is
very large or very small which is a contradiction with condition
(\ref{shear_fan_qs}). We proved that $h_s$ extends to a
homeomorphism of $\hat{\R}$.

\begin{figure}
\centering
\includegraphics[scale=0.3]{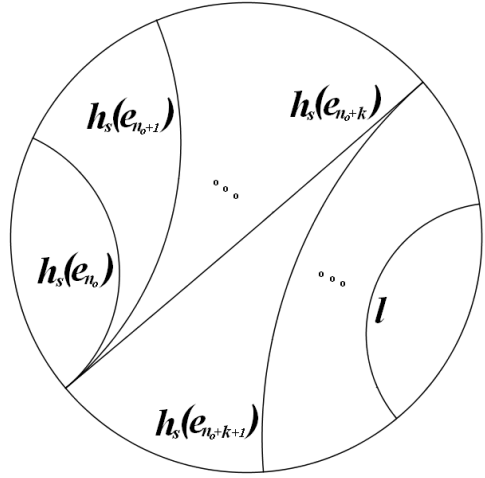}
\caption{The accumulation to $l$.} \label{fig3}
\end{figure}

\vskip .2 cm

It remains to show that $h_s:\hat{\R}\to\hat{\R}$ is a
quasisymmetric map. Let $F_s=ex(h_s)$ be the barycentric extension
of $h_s$ (see Douady-Earle \cite{DE} for the definition). Then
$F_s:\H\to\H$ is a real analytic diffeomorphism of $\H$. The map
$h_s$ is quasisymmetric if and only if $F_s$ is quasiconformal.
Let $\mu_{F_s}=\frac{\bar{\partial}F_s}{\partial F_s}$ be the
Beltrami coefficient of $F_s$.

\vskip .2 cm

Assume on the contrary that $F_s$ is not quasiconformal.  Then
there exists a sequence $z_n\in\H$ such that $|\mu_{F_s}(z_n)|\to
1$ as $n\to\infty$. Since $F_s$ is a real analytic diffeomorphism (see \cite{DE}),
it follows that $z_n$ leaves every compact subset of $\H$. There
are two possibilities for $z_n$. Either there exist a horoball $D$
with center at $\infty$ and a subsequence $z_{n_k}$ of $z_n$ such
that $z_{n_k}$ lies outside the $PSL_2(\Z )$ orbit of $D$, or
sequence $z_n$ enters the $PSL_2(\Z )$ orbit of every horoball
with center at $\infty$.

\vskip .2 cm

Suppose that we are in the former case. For simplicity, denote the
subsequence $z_{n_k}$ by $z_n$ again. Let $\Delta_n$ be triangle in
the complement of $\F$ which contains $z_n$. Let $A_n\in PSL_2(\Z )$
be such that $A_n(\Delta_n)=\Delta_0$. Let $B_n\in PSL_2(\R )$ be
such that $B_n\circ h_s\circ A_n^{-1}$ fixes $0$, $1$ and $\infty$.
By the conformal naturality of the barycentric extension, we have
that $ex(B_n\circ h_s\circ A_n^{-1})=B_n\circ F_s\circ
A_n^{-1}=F_n$. Let $z_n'=A_n(z_n)\in\Delta_0$. Then $z_n'$ belongs
to a compact subset of $\H$ and
$|\mu_{F_n}(z_n')|=|\mu_{F_s}(z_n)|$. Condition (\ref{shear_fan_qs})
implies that individual shears are bounded by $1/M$ from below and
by $M$ from above. This implies that the sequence of shear maps
$s\circ A_n^{-1}$ corresponding to homeomorphisms $B_n\circ h_s\circ
A_n^{-1}$ has a convergent subsequence in the sense that for each
edge $e\in\F$ the sequence of real numbers $s\circ A_{n_k}^{-1}(e)$
converges as $k\to\infty$. The limiting map $s_{\infty}:\F\to\R$
satisfies property (\ref{shear_fan_qs}) in each fan with the
constant $M$ because each $s\circ A_n^{-1}$ does. By the
normalization of $B_n\circ h_s\circ A_n^{-1}$, we get that
$B_{n_k}\circ h_s\circ A_{n_k}^{-1}$ pointwise converges to a
homeomorphism $h_{s_{\infty}}$ of $\hat{\R}$ with shear map
$s_{\infty}$. By the continuity of the barycentric extension, we get
that $|\mu_{F_{n_k}}|$ converges to $|\mu_{ex(h_{s_{\infty}})}|$
uniformly on compact subsets of $\H$. This implies that for a
compact subset $K$ of $\H$ there exists $a<1$ such that
$|\mu_{F_{n_k}}|\leq a$ on $K$. On the other hand, we have that
$|\mu_{F_{n_k}}(z_{n_k})|\to 1$ as $k\to\infty$ which gives a
contradiction.

\vskip .2 cm

Suppose that we are in the later case. Namely,
$|\mu_{F_s}(z_n)|\to 1$ as $n\to\infty$ with $z_n$ entering the
$PSL_2(\Z )$ orbit of every horoball based at $\infty$. Let
$\Delta_n$ be a complementary triangle of $\F$ which contains $z_n$. Let
$A_n\in PSL_2(\Z )$ be such that $A_n(\Delta_n)=\Delta_0$ and that
$A_n(z_n)=z_n'\to\infty$ as $n\to\infty$. Let $B_n\in PSL_2(\R )$
be such that $B_n\circ h_s\circ A_n^{-1}=h_n$ fixes $0$, $1$ and
$\infty$. Then $h_n$ satisfies property (\ref{shear_fan_qs}) with
the same constant $M$ as does $h$. By the conformal naturality of
the barycentric extension, we have that
$|\mu_{F_s}(z_n)|=|\mu_{ex(h_n)}(z_n')|\to 1$ as $n\to\infty$. Let
$\lambda_n=Im(z_n')$ and let $\lambda_n'$ be such that
$\hat{h}_n(x)=\frac{1}{\lambda_n'}h_n(\lambda_nx)$ fixes $1$. It
is clear that $\hat{h}_n$ fixes $0$ and $\infty$ as well. Let
$w_n=\frac{1}{\lambda_n}z_n'$. Then $w_n\to i$ and
$|\mu_{ex(\hat{h}_n)}(w_n)|=|\mu_{ex(h_n)}(z_n')|=|\mu_{F_s}(z_n)|\to
1$ as $n\to\infty$. We need the following lemma in order to finish
the proof.

\begin{lemma}
\label{conv_at-1} Under the above normalization, there exists a
constant $c_0>1$ such that $\frac{1}{c_0}\leq -\hat{h}_n(-1)\leq
c_0$.
\end{lemma}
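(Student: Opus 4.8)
The plan is to reduce the asserted two-sided bound to the discrete quasisymmetry already available for $h_n$, and then to interpolate from integer arguments to the real number $\lambda_n$. Since $\hat{h}_n(1)=1$ forces $\lambda_n'=h_n(\lambda_n)$, and since $h_n$ fixes $0$ and is increasing while $\lambda_n=\mathrm{Im}(z_n')>0$, we may rewrite
$$
-\hat{h}_n(-1)=\frac{-h_n(-\lambda_n)}{h_n(\lambda_n)},
$$
a ratio of positive quantities. The only input I would use is that $h_n$ satisfies (\ref{shear_fan_qs}) at the fan with tip $\infty$, equivalently the integer-triple quasisymmetry (\ref{quasisymmetry}) with the fixed constant $M$. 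I emphasize that the full quasisymmetry of $h_n$ is \emph{not} available here, since establishing it is exactly the purpose of the surrounding argument; only the discrete inequality may be invoked.

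At integer arguments the bound is immediate: applying (\ref{quasisymmetry}) for $h_n$ centered at $0$ with gap $N$ gives
$$
\frac{1}{M}\leq\frac{-h_n(-N)}{h_n(N)}\leq M\qquad (N\in\N).
$$
The difficulty is that $\lambda_n$ is real, so I must pass from integers to $\lambda_n$. Write $N=\lfloor\lambda_n\rfloor$; since $z_n'\to\infty$ while $w_n=z_n'/\lambda_n\to i$, necessarily $\lambda_n\to\infty$, so $N\geq 1$ for all large $n$ (the finitely many remaining terms contribute only finitely many finite positive values of $-\hat{h}_n(-1)$, absorbed by enlarging $c_0$). By monotonicity of $h_n$,
$$
\frac{-h_n(-N)}{h_n(N+1)}\leq\frac{-h_n(-\lambda_n)}{h_n(\lambda_n)}\leq\frac{-h_n(-(N+1))}{h_n(N)},
$$
so it remains to control the ratio of consecutive integer values $h_n(N+1)/h_n(N)$, together with its analogue on the negative side.

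This consecutive-ratio estimate is the crux and the main obstacle, and it is where the non-integrality of $\lambda_n$ must be overcome. Setting $\delta_j=h_n(j)-h_n(j-1)>0$, the inequality (\ref{quasisymmetry}) centered at $N$ with gap $1$ yields $\delta_{N+1}\leq M\,\delta_N$, while $h_n(N)=\sum_{j=1}^{N}\delta_j\geq\delta_N$; hence
$$
\frac{h_n(N+1)}{h_n(N)}=1+\frac{\delta_{N+1}}{h_n(N)}\leq 1+\frac{\delta_{N+1}}{\delta_N}\leq 1+M,
$$
and the identical estimate holds for $-h_n(-j)$ using the increments $h_n(-(j-1))-h_n(-j)$. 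Combining this with the integer bound of the previous paragraph gives
$$
\frac{1}{M(1+M)}\leq-\hat{h}_n(-1)\leq M(1+M),
$$
so $c_0=M(M+1)>1$ works for all large $n$, and after enlarging $c_0$ to cover the finitely many small-$\lambda_n$ terms it works for all $n$. The saving observation is precisely that the final increment $\delta_{N+1}$ is comparable, within the factor $M$, to the entire partial sum $h_n(N)$, which is what converts the purely discrete quasisymmetry into a bound at the real point $\lambda_n$.
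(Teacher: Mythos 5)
Your proof is correct and follows essentially the same route as the paper: sandwich $\lambda_n$ between consecutive integers, use the discrete quasisymmetry at the symmetric triple $(-N,0,N)$, and control the consecutive ratio $h_n(N+1)/h_n(N)\leq 1+M$, arriving at the same constant $c_0=M(M+1)$. The only cosmetic difference is that you derive the consecutive-ratio bound from $\delta_{N+1}\leq M\delta_N$ and $h_n(N)\geq\delta_N$, whereas the paper gets $h_n(k_n+1)-h_n(k_n)\leq Mh_n(k_n)$ directly from the fan condition.
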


\begin{proof}
Let $k_n\in\N$ be such that $k_n\leq\lambda_n\leq k_n+1$. Then
$h_n(k_n)\leq h_n(\lambda_n)=\lambda_n'\leq h_n(k_n+1)$. By property
(\ref{shear_fan_qs}), we have that $h_n(k_n+1)-h_n(k_n)\leq
Mh_n(k_n)$. This implies that
\begin{equation}
\label{ineq1}
h_n(k_n+1)\leq (M+1)h_n(k_n)\leq (M+1)\lambda_n'.
\end{equation}

\vskip .2 cm

By applying property (\ref{shear_fan_qs}) to $h_n$ at points
$-(k_n+1)$, $0$ and $k_n+1$, we get that $\frac{1}{M}h_n(k_n+1)\leq
-h_n(-k_n-1)\leq Mh_n(k_n+1)$. Similarly, we get that
$\frac{1}{M}h_n(k_n)\leq -h_n(-k_n)\leq Mh_n(k_n)$. These two
inequalities imply that
$$-Mh_n(k_n+1)\leq h_n(-k_n-1)\leq
h_n(-\lambda_n)\leq h_n(-k_n)\leq -\frac{1}{M}h_n(k_n).$$

\vskip .2 cm

From (\ref{ineq1}), we get
$$
h_n(k_n)\geq\frac{1}{M+1}h_n(k_n+1)\geq\frac{1}{M+1}\lambda_n'.
$$

The above two inequalities and (\ref{ineq1}) give that
$$
-M(M+1)\lambda_n'\leq h_n(-\lambda_n)\leq
-\frac{1}{M(M+1)}\lambda_n'
$$
which implies
$$
-M(M+1)\leq\frac{1}{\lambda_n'}h_n(-\lambda_n)=\hat{h}_n(-1)\leq
-\frac{1}{M(M+1)}.
$$
Take $c_0=M(M+1)$ and the above becomes $\frac{1}{c_0}\leq
-\hat{h}_n(-1)\leq c_0$.
\end{proof}

\vskip .2 cm

We finish the proof using the above lemma. Note that $\hat{h}_n$
fixes $0$, $1$ and $\infty$, and $\hat{h}_n(-1)$ is bounded away
from $0$ and $\infty$ by the above lemma. Then Lemma \ref{baryc}
implies that $|\mu_{\hat{h}_n}|\leq c<1$ in a neighborhood of
$i\in\H$ and for all $n\in\N$ (see also \cite[Lemma 3.6]{Mar},
\cite{AEM}, \cite{DE}). On the other hand, the assumption on $w_n$
and conformal naturality of barycentric extension implies that
$|\mu_{\hat{h}_n}(w_n)|\to 1$ as $n\to\infty$ which is a
contradiction. This finishes the proof of the first statement in
Theorem A. $\Box$

\vskip .2 cm

\noindent {\it Proof of the second part of Theorem A.} Consider a
fan of geodesics of $\F$ with tip $p\in\hat{\mathbf{Q}}$ and assume
that $e_n\in\F$, $n\in\Z$, is a fixed correspondence with $\Z$
induced by natural ordering as before. Let $a_n\in\hat{\Q}$ be the
endpoint of $e_n$ that is different from $p$. Then $(a_k,a_m,a_n,p)$
are in the cyclic order of $\hat{\R}$ if $k<m<n$. The triple
$a_k,a_m,a_n$ is said to be {\it fan-symmetric} if $m-k=n-m$. The
point $a_m$ is said to be the {\it midpoint} of the triple.

\vskip .2 cm

Let $a_k,a_m,a_n\in\hat{\Q}$ be a fan-symmetric triple for the fan
with tip $p\in\hat{\Q}$, where $a_m$ is the mid-point of the
triple. This implies that
$cr(p,a_k,a_m,a_n)=\frac{(a_m-p)(a_n-a_k)}{(a_n-p)(a_m-a_k)}=2$.
The {\it generation of a triple} $e_k,e_m,e_n$ of geodesics is the
minimum of the Farey generations of $e_k$ and $e_n$ . Let
$h:\hat{\R}\to\hat{\R}$ be a symmetric map which fixes $0$, $1$
and $\infty$. If the generation of a triple $e_k,e_m,e_n$ is
large, it follows that the points $a_k$, $p$ and $a_n$ are close
in the angle metric of $\hat{\R}$ with respect to $i\in\H$. The
barycentric extension $ex(h)=F$ of $h$ has Beltrami coefficient
close to zero in a definite Euclidean neighborhood in $\H$ of the
triple $(a_k,p,a_n)$ (see \cite{EMS}). A length-area argument
implies that $cr(h(p),h(a_k),h(a_m),h(a_n))$ is close to $2$ (see,
for example, \cite{GS}). After post-composing $h$ by $A\in
PSL_2(\R )$ such that $A\circ h(a_m)=\infty$, this is equivalent
to the fact that the ratio $\frac{|A\circ h(p)-A\circ
h(a_k)|}{|A\circ h(a_n)-A\circ h(p)}$ is close to $1$. Let
$s:\F\to\R$ be the shear map of $h$ and let $s_i=s(e_i)$. Then for
a given $\epsilon
>0$, there exists $k=k(\epsilon )\in\N$ such that on any fan-symmetric triple of
generation at least $k$ the shear map $s:\F\to\R$ satisfies
\begin{equation}
\label{shear_fan_sym} \frac{1}{1+\epsilon}\leq
\frac{1+e^{s_1}+\cdots +e^{s_1+s_2+\cdots
+s_n}}{e^{-s_0}+e^{-s_0-s_{-1}}+\cdots +e^{-s_0-s_{-1}-\cdots
-s_{-n}}}\leq 1+\epsilon .
\end{equation}
Thus we established the necessity of the second condition in Theorem
A.

\vskip .2 cm

We show that the second condition in Theorem A is also sufficient
for a map to be symmetric. For any $k\in\N$, there are only finitely
many geodesics in $\F$ whose generation is at most $k$. Together
with (\ref{shear_fan_sym}), this implies that the shear map
$s:\F\to\R$ is bounded and that $s(e)$ converges to $0$ as the
generation of $e$ converges to $\infty$, where the speed of
convergence depends only on the generation of $e\in\F$. The cocycle
map $h_s$ of the shear map $s$ with property (\ref{shear_fan_sym})
extends to a homeomorphism of $\hat{\R}$. The proof follows the same
lines as in the proof of the first part of Theorem A and we omit it
here.

\vskip .2 cm

It remains to show that $h_s$ is a symmetric map. We consider the
barycentric extension $ex(h_s)=F_s$ of $h_s$. It is enough to show
that $F_s$ is an asymptotically conformal map of $\H$ (see \cite{EMS}).

\vskip .2 cm

Assume on the contrary that there exists a sequence $z_n\in\H$
which leaves every compact subset of $\H$ such that
$|\mu_{F_s}(z_n)|\geq c>0$. Let $\Delta_n$ be the ideal triangle
in $\F$ which contains $z_n$. Let $A_n\in PSL_2(\Z )$ be such that
$A_n(\Delta_n)=\Delta_0$, where $\Delta_0$ is the triangle in $\F$
with vertices $0$, $1$ and $\infty$. Let $B_n\in PSL_2(\R )$ be
such that $h_n=B_n\circ h_s\circ A_n^{-1}$ fixes $0$, $1$ and
$\infty$. Let $z_n'=A_n(z_n)\in\Delta_0$ and let $F_n=ex(h_n)$ be
the barycentric extension of $h_n$.

\vskip .2 cm

Assume first that a subsequence of $z_n'$ stays in a compact part
of $\Delta_0$, and for simplicity we denote the subsequence by
$z_n'$ again. This implies that the sequence of $\Delta_n$'s
contains infinitely many pairwise different triangles because
$z_n$ leave any compact subset of $\H$. In particular, the minimum
of the generations of the edges of $\Delta_n$ converges to
infinity as $n\to\infty$. Consequently, shear maps $s\circ
A_n^{-1}$ converge to the zero map which implies that $h_n$
converges pointwise on $\hat{\R}$ to the identity. On the other
hand, $|\mu_{F_n}(z_n')|\geq c>0$ by conformal naturality of the
barycentric extension. This is a contradiction with the continuity
properties of the barycentric extension (see \cite{DE} or Section
2).

\vskip .3 cm

In the other case, we assume that $z_n'\to\infty$ inside $\Delta_0$
as $n\to\infty$. Let $\lambda_{n,1}=[Im(z_n')]$ be the greatest
integer less than or equal to $Im(z_n')$. Clearly
$\lambda_{n,1}\to\infty$ as $n\to\infty$. Let
$\lambda_{n,1}'=h_n(\lambda_{n,1})$. Define
$\frac{1}{\lambda_{n,1}'}h_n(\lambda_{n,1}x)=\tilde{h}_{n,1}(x)$
and note that $\tilde{h}_{n,1}(x)$ fixes $0$, $1$ and $\infty$.

\vskip .2 cm

Let $x,y,z\in\Z$ be symmetric points such that
$\tilde{h}_{n,1}(x)\to x$ and $\tilde{h}_{n,1}(y)\to y$ as
$n\to\infty$. If either $x>y>z$ and $x\neq 0$, or $x<y<z$ and
$x\neq 0$, or $x<z<y$ and $x,y\neq 0$, then we claim that
$\tilde{h}_{n,1}(z)\to z$ as $n\to\infty$. We prove this when
$x>y>z$ and $x\neq 0$. Other cases are similar and they are left
to the reader. For $z=0$ we have immediately that
$\tilde{h}_{n,1}(z)=z$. We assume now that $z\neq 0$. Since $x\neq
0$ we have that $\lambda_{n,1}x\to\pm\infty$ and
$\lambda_{n,1}z\to\pm\infty$ as $n\to\infty$. This implies that
the generation of the symmetric triples $e_{\lambda_{n,1}x}$,
$e_{\lambda_{n,1}y}$ and $e_{\lambda_{n,1}z}$ goes to infinity as
$n\to\infty$. Then the condition (\ref{shear_fan_sym}) implies
that
$\frac{\tilde{h}_{n,1}(x)-\tilde{h}_{n,1}(y)}{\tilde{h}_{n,1}(y)-\tilde{h}_{n,1}(z)}\to
1$ as $n\to\infty$ because
$\lambda_{n,1}x,\lambda_{n,1}y,\lambda_{n,1}z\in\Z$ and
$h_n(\lambda_{n,1}x),h_n(\lambda_{n,1}y),h_n(\lambda_{n,1}z)$
depend only on the shears at the fan with tip $\infty$. This gives
$\tilde{h}_{n,1}(z)\to z$ as $n\to\infty$.

\vskip .2 cm

Recall that $\tilde{h}_{n,1}$ fixes $0$, $1$ and $\infty$. We use
the statement in the above paragraph to show that
$\lim_{n\to\infty}\tilde{h}_{n,1}(k)=k$ for all $k\in\Z$. Using
the triple $-1$, $0$ and $1$, we get that
$\lim_{n\to\infty}\tilde{h}_{n,1}(-1)=-1$. Then using the triple
$-1$, $1$ and $3$ we get that
$\lim_{n\to\infty}\tilde{h}_{n,1}(3)=3$. The triple $1$, $2$ and
$3$ gives that $\lim_{n\to\infty}\tilde{h}_{n,1}(2)=2$. Then using
the triple $2$, $3$ and $4$ gives the convergence for $4$, and
continuing like this we get the convergence
$\lim_{n\to\infty}\tilde{h}_{n,1}(k)=k$ for all $k\in\Z^{+}$.
Similarly, we get $\lim_{n\to\infty}\tilde{h}_{n,1}(k)=k$ for all
$k\in\Z^{-}$.

\vskip .2 cm

Let $\lambda_{n,r}$ be the greatest integer multiple of $2^{r-1}$
which is less than or equal to $Im(z_n')$ for $r\in\N$. Clearly
$\lambda_{n,r}\to\infty$ as $n\to\infty$. Let
$\lambda_{n,r}'=h_n(\lambda_{n,r})$. Define
$\frac{1}{\lambda_{n,r}'}h_n(\lambda_{n,r}x)=\tilde{h}_{n,r}(x)$ and
note that $\tilde{h}_{n,r}(x)$ fixes $0$, $1$ and $\infty$. We claim
that $\lim_{n\to\infty}\tilde{h}_{n,r}(k/2^{i})=k/2^{i}$ for all
$k\in\Z$ and $i=0,\ldots ,r-1$. For a fixed $r$, the proof is by
finite induction on $i$. The case $i=0$ is proved in the above
paragraph. Assume that the statement is true for $i$ and we need to
prove that it is true for $i+1$. The inductive hypothesis says that
$\lim_{n\to\infty}\tilde{h}_{n,r}(k/2^{i})=k/2^{i}$ for $k\in\Z$
because $\lambda_{n,r}\frac{k}{2^{i}}\in\Z$ for each $n\in\N$. Since
each $k/2^{i+1}$, for $k\in\Z$ odd, is in the middle of
$(k-1)/2^{i+1}$ and $(k+1)/2^{i+1}$ on which the convergence holds
and since $\lambda_{n,r}\frac{k}{2^{i+1}}\in\Z$, it follows similar
to the above that
$\lim_{n\to\infty}\tilde{h}_{n,r}(k/2^{i+1})=k/2^{i+1}$. This
finishes the induction.

\vskip .2 cm

We use the Cantor diagonalization process to obtain a contradiction.
The set $D=\{ k/2^{r-1}:r\in\N , k\in\Z\}$ is a dense subset of
$\hat{\R}$. We put $D$ into a sequence $\{ b_m\}$ such that if
$b_m=k/2^{r-1}$ for minimal $r\in\N$ then $m\geq r$. Fix $m\in\N$.
Then there exists $n_m$ such that $|\tilde{h}_{n_m,m}(b_i)-b_i|<1/m$
for $i=1,2,\ldots ,m$ and $|z_m'/\lambda_{n_m,m}-i|<1/m$. This
implies that $\tilde{h}_{n_m,m}$ converges pointwise to the identity
on $\hat{\R}$. On the other hand, the Beltrami coefficient of
$ex(\tilde{h}_{n_m,m})$ at $\frac{z_m'}{\lambda_{n_m,m}}$ is bounded
away from $0$ by conformal naturality of the barycentric extension.
This is a contradiction. Therefore $h_s$ is symmetric which finishes
the proof of Theorem A. $\Box$

\section{The topology on $\X$}

Let $\X$ be the space of all shear maps $s:\F\to\R$ which satisfy
condition (\ref{shear_fan_qs}) on each fan of geodesics in $\F$ with
the same constant. Theorem A implies that the universal
Teichm\"uller space $T(\H )$ is parameterized by the space $\X$. We
turn our attention to the topology on $\X$ which would make the map
$T(\H )\to\X$ a homeomorphism.

\vskip .2 cm

Consider a shear map $s\in\X$ and a fan of geodesics in $\F$ with
tip $p$. Let $e_n$, $n\in\Z$, be the enumeration of the fan. For a
given horocycle $C$ with center $p$, we denote by $s(p;n,k)$ the
quotient of the length of arc of $C$ between $h_s(e_{n+k})$ and
$h_s(e_n)$ to the length of the arc of $C$ between $h_s(e_n)$ and
$h_s(e_{n-k})$, for $n,k\in\Z$. Note that $s(p;n,k)$ is the
expression in the middle of (\ref{shear_fan_qs}) described in a
coordinate independent fashion.

\vskip .2 cm

Let $M(s)\geq 1$ be the supremum of $s(p;n,k)$ over all
$p\in\hat{\Q}$, $n,k\in\Z$. If $M(s)<\infty$, then we say that
$s:\F\to\R$ satisfies $M(s)$-shear condition. For example, the shear
map $s_{id}$ of the basepoint $id\in T(\H )$ is assigning $0$ to
each edge of $\F$ and $M(s_{id})=1$.

\vskip .2 cm

More generally, let $s_1,s_2\in\X$. Define $M(s_1,s_2)$ to be the
supremum of the maximum of $s_1(p;n,k)/s_2(p;n,k)$ and $s_2(p;n,k)/s_1(p;n,k)$ over
all $p\in\hat{\Q}$, $n\in\Z$ and $k\in\N$. Note that
$M(s_1,s_2)=M(s_2,s_1)$ and that $M(s_1,s_{id})=M(s_1)$.

\vskip .2 cm

Let $h_n:\hat{\R}\to\hat{\R}$ be a sequence of quasisymmetric maps
which fix $0$, $1$ and $\infty$, and which converge to the identity
in the Teichm\"uller topology in $T(\H )$. Then we immediately
obtain that $M(s_{h_n})\to 1$ as $n\to\infty$ from the
quasisymmetric condition.

\vskip .3 cm

\noindent {\it Proof of Theorem B.} Recall that $h_n\to id$ in the
Teichm\"uller topology if and only if
$\sup\frac{cr(h_n(a),h_n(b),h_n(c),h_n(d))}{cr(a,b,c,d)}\to 1$ as
$n\to\infty$, where the cross-ratio is
$cr(a,b,c,d)=\frac{(c-a)(d-b)}{(d-a)(c-b)}$ and the supremum is over
all quadruples $(a,b,c,d)\in (\hat{\R})^4$ with the cross-ratio
between $1+1/M$ and $1+M$ for some $M>1$. By the definition, $h_n\to
h$ as $n\to\infty$ in the Teichm\"uller topology if and only if
$h_n\circ h^{-1}\to id$ in the Teichm\"uller topology. A quadruple
of points in $\hat{\Q}$ with cross-ratio $2$ where one point is the
tip of the fan such that the other three points are endpoints of
geodesics in the fan is {\it fan-symmetric} (see proof of Theorem A
for equivalent definition). The cross-ratio of the image under $h$
of a fan-symmetric quadruple is bounded away from $1$ and $\infty$
because $h$ is quasisymmetric. The above characterization of the
Teichm\"uller topology when applied to $h_n\circ h^{-1}\to id$ at
the images under $h$ of all fan-symmetric quadruples gives that
$M(s_{h_n},s_h)\to 1$ as $n\to\infty$. This proves the necessity of
the condition.

\vskip .2 cm

Given $h,h_n\in T(\H )$ such that $M(s_{h_n},s_h)\to 1$ as
$n\to\infty$, we need to show that $h_n\to h$ as $n\to\infty$.
Assume on the contrary that $h_n$ does not converge to $h$ in the
Teichm\"uller topology. Let $F=ex(h)$ and $F_n=ex(h_n)$ be the
barycentric extensions of $h$ and $h_n$, respectively. The
assumption implies that there exists $c>0$ and a sequence
$z_n\in\H$ such that $|\mu_F(z_n)-\mu_{F_n}(z_n)|\geq c$. There
are two possibilities for the sequence $z_n$. Either there exists
a horoball $C$ with center $\infty$ and a subsequence $z_{n_k}$
such that $z_{n_k}$ is disjoint from the $PSL_2(\Z )$ orbit of
$C$, or for any horoball $C$ with center $\infty$ only finitely
many $z_n$'s lie outside the $PSL_2(\Z )$ orbit of $C$.

\vskip .2 cm

Assume we are in the former case. For the convenience of notation,
replace $z_{n_k}$ with $z_n$. Let $A_n\in PSL_2(\Z )$ be such that
$A_n(\Delta_n)=\Delta_0$, where $\Delta_n$ is a complementary
triangle of $\F$ which contains $z_n$. Then $A_n(z_n)$ lies in a
compact subset of $\H$. Let $B_n,B_n^{*}\in PSL_2(\R )$ be such
that $B_n\circ h\circ A_n^{-1}$ and $B_n^{*}\circ h_n\circ
A_n^{-1}$ fix $0$, $1$ and $\infty$. Since $M(s_{h_n},s_h)\to 1$
as $n\to\infty$, we get that $B_n\circ h\circ A_n^{-1}$ and
$B_n^{*}\circ h_n\circ A_n^{-1}$ pointwise converge to the same
quasisymmetric map. Therefore the Beltrami coefficients of their
corresponding barycentric extensions converge uniformly on compact
subsets of $\H$ to the same Beltrami coefficient (see \cite{DE}).
This contradicts $|\mu_F(z_n)-\mu_{F_n}(z_n)|\geq c$.

\vskip .2 cm

Assume we are in the later case. Let $A_n\in PSL_2(\Z )$ and
$B_n,B_n^{*}\in PSL_2(\R )$ be as above. Let $h_n'=B_n\circ h\circ
A_n^{-1}$ and $h_n^{*}=B_n^{*}\circ h_n\circ A_n^{-1}$. In addition,
we may assume that $A_n(z_n)\to\infty$ as $n\to\infty$. To find a
contradiction in this case, we use the idea from the proof of the
last part of Theorem A. Denote by $\lambda_{n,r}$ the largest
integer multiple of $2^{r-1}$ which is less than or equal to
$Im(z_n)$. Let $\lambda_{n,r}'=h_n'(\lambda_{n,r})$ and
$\lambda_{n,r}^{*}=h_n^{*}(\lambda_{n,r})$. Then
$\tilde{h}_{n,r}'(x)=\frac{1}{\lambda_{n,r}'}h_n'(\lambda_{n,r} x)$
and
$\tilde{h}_{n,r}^{*}(x)=\frac{1}{\lambda_{n,r}^{*}}h_n^{*}(\lambda_{n,r}
x)$ fix $0$, $1$ and $\infty$. For each $r\in\N$, sequences
$\tilde{h}_{n,r}'(x)$ and $\tilde{h}_{n,r}^{*}(x)$ have convergent
subsequences (in the pointwise sense) whose limits $h^1_r$ and
$h^2_r$ agree on the set $\{ k/2^{r-1}:k\in\Z\}$ because
$M(s_{h_n},s_h)\to 1$ as $n\to\infty$. The values of maps $h^1_r$
and $h^2_r$ on $\{ k/2^{r-1}:k\in\Z\}$ depend only on the shears of
$h_n'$ and $h_n^{*}$ on the fan with tip $\infty$. Using the Cantor
diagonalization process, we find sequences $\tilde{h}_{n_m,m}'(x)$
and $\tilde{h}_{n_m,m}^{*}(x)$ whose pointwise limits $h^1$ and
$h^2$ satisfy $h^1=h^2$ and $\frac{z_m'}{\lambda_{n_m,m}}\to i\in\H$
as $m\to\infty$. This again gives a contradiction with
$|\mu_F(z_m)-\mu_{F_m}(z_m)|\geq c$ by conformal naturality of the
barycentric extension. $\Box$

\section{Decorated tesselations and lambda lengths}

A {\it tesselation} $\tau$ of $\H$ is a locally finite countable
geodesic lamination of $\H$ such that the components in
$\H\setminus\tau$ are ideal hyperbolic triangles. A {\it decorated
tesselation} $\tilde{\tau}$ is a tesselation $\tau$ of $\H$
together with an assignment of a horocycle to each vertex of
$\tau$ whose center is that vertex (see \cite{Pe}).

\vskip .2 cm

Let $\tau$ be a tesselation with a distinguished oriented edge
$e=(x_i,x_t)$, where $x_i$ is the initial point and $x_t$ is the
terminal point of $e$. Recall that $\F$ is the Farey tesselation
and let $(-1,1)$ be a distinguished oriented edge of $\F$. Denote
by $\tau^{0}$ the set of vertices of $\tau$. Recall that
$\hat{\Q}\subset\hat{\R}$ is the set of vertices of $\F$. There
exists a unique map $h_{\tau}:\hat{\Q}\to\tau^{0}$ such that
$h_{\tau}(x_i)=-1$, $h_{\tau}(x_t)=1$ and that if
$x,y,x\in\hat{\Q}$ are vertices of a complementary triangle of
$\F$ then $h_{\tau}(x),h_{\tau}(y),h_{\tau}(z)\in\tau^0$ are the
vertices of a complementary triangle of $\tau$ (see \cite{Pe}). We
call $h_{\tau}$ the {\it characteristic map} of $\tau$. It is
clear that $h_{\tau}:\hat{\Q}\to \hat{\R}$ extends by continuity
to a homeomorphism of $\hat{\R}$ because $\hat{\Q},\tau^0$ are
dense in $\hat{\R}$ and $h_{\tau}$ is monotone on $\hat{\Q}$.

\vskip .2 cm

Given a decorated tesselation $\tilde{\tau}$ together with a
distinguished oriented edge $e\in\tau$, Penner \cite{Pe1} assigns to
each edge $f\in\F$ a positive number as follows. Let $C_1$ and $C_2$
be horocycles of the decoration $\tilde{\tau}$ based at the
endpoints of $h_{\tau}(f)\in\tau$. Let $\delta (f)$ be a signed
hyperbolic distance between $M_1=h_{\tau}(f)\cap C_1$ and
$M_2=h_{\tau}(f)\cap C_2$, where the sign is positive if the arc of
$h_{\tau}(f)$ between $M_1$ and $M_2$ is outside $C_1$ and $C_2$,
otherwise the sign is negative (see \cite{Pe1}). The {\it lambda
length } of $f\in\F$ is given by
$$
\lambda (f)=e^{-2\delta (f)}.
$$
This introduces the {\it lambda length} function $\lambda
:\F\to\R^{+}$ for any decorated tesselation $\tilde{\tau}$ (see
Penner \cite{Pe}). Let $e,e'\in\tau$ be adjacent edges. Then
$h_{\tau}:\F\to\tau$ maps adjacent edges $f,f'\in\F$ onto $e,e'$,
respectively. We define {\it horocyclic length} $\alpha (f,f')$ to
be the length of the arc of the horocycle from $\tilde{\tau}$ with
center the common endpoint of $e$ and $e'$ that lies inside the
hyperbolic wedge with boundary sides $e,e'$.

\vskip .2 cm

Conversely, given a map $\lambda :\F\to\R$ there exists a monotone
map $h_{\lambda}:\hat{\Q}\to \hat{\R}$, called the {\it
characteristic map} of $\lambda$, and a decoration (i.e. choice of
horocycles) on $h_{\lambda}(\hat{\Q})$ such that the lambda length
of $h_{\lambda}(f)$ with respect to the decoration is equal to
$\lambda (f)$. The characteristic map $h_{\lambda}:\hat{\Q}\to
\hat{\R}$ does not always extend to a homeomorphism similar to the
case of shears. It is a fundamental question in this theory to give
necessary and sufficient condition on the map $\lambda :\F\to\R$
such that $h_{\lambda}$ extends by continuity to a homeomorphism or
perhaps to a quasisymmetric map. Penner and Sullivan \cite[Theorem
6.4]{Pe} gave a sufficient condition on the lambda lengths to induce
a quasisymmetric map as follows. A lambda length function $\lambda
:\F\to\R$ is said to be {\it pinched} if there exists $K>1$ such
that
$$
\frac{1}{K}\leq\lambda (f)\leq K,
$$
for all $f\in\F$. Penner and Sullivan showed that if $\lambda
:\F\to\R$ is pinched then the characteristic map $h_{\lambda}$
extends to a quasisymmetric map of $\hat{\R}$ \cite[Theorem
6.4]{Pe}.

\vskip .2 cm

In Theorem E we give a necessary and sufficient condition such that $h_{\lambda}$ is a quasisymmetric (as
well as a symmetric) map of $\hat{\R}$.

\vskip .2 cm

\noindent {\it Proof of Theorem E.} Let $\tau =h_{\lambda}(\F )$ be
the geodesic lamination corresponding to the lambda lengths
$\lambda$ and let $\tilde{\tau}$ be the decorations at the vertices
of $\tau$ corresponding to $\lambda$ (see Penner \cite{Pe} for the
construction). Let $s:\F\to\R$ be shear map corresponding to
$h_{\lambda}$. Let $e_n\in\F$, $n\in\Z$, be a fan of geodesics with
tip $p$. Then we have
$$
s(p;n,k)=\frac{\alpha (e_m,e_{m+1})+\alpha (e_{m+1},e_{m+2})+\cdots +\alpha (e_{m+k},e_{m+k+1})}
{\alpha (e_m,e_{m-1})+\alpha (e_{m-1},e_{m-2})+\cdots +\alpha (e_{m-k},e_{m-k-1})}.
$$
Theorem E immediately follows from Theorem A. $\Box$

\vskip .2 cm

In Theorem D, we find a necessary and sufficient condition such that
$h_{\lambda}$ extends to a homeomorphism of $\hat{\R}$. The criteria
follows from the proof of Theorem C and it is obtained by
calculating the length of $l(P_1)$ in terms of horocyclic and lambda
lengths. Since the horocyclic lengths are expressed in terms of the
lambda lengths, the formula can be written only in terms of the
lambda lengths although we do not do this.

\vskip .2 cm

\noindent {\it Proof of Theorem D.}
Let $\lambda :\F\to\R$ be an assignment of lambda lengths and let
$h_{\lambda}:\hat{\Q}\to\hat{\R}$ be the characteristic map. Denote by
$\tau$ the image tesselation $h_{\lambda}(\F )$ and by
$\tilde{\tau}$ the decoration which realizes the lambda lengths
$\lambda$.

\vskip .2 cm

Let $e_n$, for $n\in\N$, be an arbitrary chain in $\F$. Denote by
$\lambda_n=\lambda (e_n)$ the lambda length of $e_n$. Then
$\lambda_n=e^{-2\delta_n}$, where $\delta_n$ is the signed
hyperbolic distance between the horocycles of $\tilde{\tau}$ with
centers at the endpoints of $e_n$. Thus
$\lambda_n^{-1/2}=e^{\delta_n}$. Let $W_n$ be the wedge with
boundary sides $h_{\lambda}(e_n)$ and $h_{\lambda}(e_{n+1})$ and
let $C_n$ be the horocycle of the decoration $\tilde{\tau}$ with
center at the common endpoint of $h_{\lambda}(e_n)$ and
$h_{\lambda}(e_{n+1})$. Let $\alpha_n$ be the horocyclic length
for the wedge with boundaries $e_n$ and $e_{n+1}$ namely the
length of $C_n\cap W_n$. Let $l_n$ be the length of $l(P_1)\cap
W_n$, where $P_1$ is chosen such that
$l_1=\lambda_1^{-\frac{1}{2}}\alpha_1 =e^{\delta_1}\alpha_1$.

\vskip .2 cm

We need to show that
$l_n=(\lambda_n^{-\frac{1}{2}}\lambda_{n-1}^{\frac{1}{2}}\cdots
\lambda_1^{\frac{(-1)^n}{2}})\alpha_n$. An elementary hyperbolic
considerations shows that $l_n=e^{d_n}\alpha_n$ where $d_n$ is the
signed distance from $l(P_1)\cap W_n$ to the horocycle $C_n$ such
that $d_n>0$ if $l(P_1)\cap W_n$ is outside $C_n$ and that
otherwise $d_n<0$. Therefore it remains to show that
$e^{d_n}=\lambda_n^{-\frac{1}{2}}\lambda_{n-1}^{\frac{1}{2}}\cdots
\lambda_1^{\frac{(-1)^n}{2}}$.

\vskip .2 cm

We finish the argument by induction on $n$. By our choice of
$P_1$, we have immediately that
$e^{d_1}=e^{\delta_1}=\lambda_1^{-\frac{1}{2}}$. Assume that $n>1$
and that
$e^{d_n}=\lambda_n^{-\frac{1}{2}}\lambda_{n-1}^{\frac{1}{2}}\cdots
\lambda_1^{\frac{(-1)^n}{2}}$. We calculate $e^{d_{n+1}}$. Since
$d_n$ is the signed distance from $l(P_1)\cap W_n$ to $C_n$, it
follows that the signed distance of $l(P_1)\cap H_s(e_{n+1})$ to
$C_n$ is $d_n$. Since $\delta_{n+1}$ is the signed distance
between $C_n$ and $C_{n+1}$, it follows that
$d_{n+1}=\delta_{n+1}-d_n$. This gives the desired formula. $\Box$


\begin{thebibliography}{Thua}

\vskip .5cm

\bibitem{AEM} W. Abikoff, C. Earle and S. Mitra, {\it
Barycentric extensions of monotone maps of the circle}, In the
tradition of Ahlfors and Bers, III, 1--20, Contemp. Math., 355,
Amer. Math. Soc., Providence, RI, 2004.

\bibitem{A} L. V. Ahlfors, {\it Lectures on Quasiconformal Mappings}, D. Van
Nostrand Company, Inc., Princeton, New Jersey, 1966.


\bibitem{Be} L. Bers, {\it Universal Teichm\"
uller Space}, Analytic methods in mathematical physics (Sympos.,
Indiana Univ., Bloomington, Ind., 1968), pp. 65--83. Gordon and
Breach, New York, 1970.

\bibitem{Bo} F. Bonahon, {\it Shearing hyperbolic surface,
bending pleated surface and Thurston's sympletic form}, Ann. Fac.
Sci. Toulouse Math. (6) 5 (1996), 233-297.


\bibitem{DE} A. Douady and C. J. Earle, {\it Conformally natural
extension of homeomorphisms of the circle}, Acta Math. 157 (1986),
no. 1-2, 23-48.

\bibitem{EGL} C. Earle, F. Gardiner and N. Lakic, {\it Asymptotic Teichmü\"uller space. II. The metric
structure.}, In the tradition of Ahlfors and Bers, III, 187--219,
Contemp. Math., 355, Amer. Math. Soc., Providence, RI, 2004.

\bibitem{EMS} C. Earle, V. Markovic and D. \v Sari\'c, {\it Barycentric extension and the Bers embedding
for asymptotic Teichm\"uller space}, Complex manifolds and
hyperbolic geometry (Guanajuato, 2001), 87--105, Contemp. Math.,
311, Amer. Math. Soc., Providence, RI, 2002.

\bibitem{EM} D. Epstein and A. Marden, {\it Convex hulls in hyperbolic space, a theorem of Sullivan,
and measured pleated surfaces}, Analytical and geometric aspects
of hyperbolic space (Coventry/Durham, 1984), 113--253, London
Math. Soc. Lecture Note Ser., 111, Cambridge Univ. Press,
Cambridge, 1987.




\bibitem{GL} F. Gardiner and N. Lakic, {\it
Quasiconformal Teichm\"{u}ller Theory}, Mathematical Surveys and
Monographs, Volume 76, A.M.S. 2000.

\bibitem{J} F. Gardiner and Y. Jiang, {\it Asymptotically Affine and Asymptotically Conformal Circle
Endomorphisms}, To appear in Lecture notes, RIMS, Kokyuroku
Bessatsu, 2009, Kyoto, Japan.

\bibitem{KM} J. Kahn and V. Markovic, {\it Random ideal triangulations
and the Weil-Petersson distance between finite degree covers of punctured Riemann surfaces},
preprint, available on Arxiv.

\bibitem{GS} F. Gardiner and D. Sullivan, {\it Symmetric structures on a closed curve},
Amer. J. Math. 114 (1992), no. 4, 683--736.

\bibitem{Gh} E. Ghys, {\it Groups acting on the circle}, Enseign. Math. (2) 47 (2001), no. 3-4, 329-407



\bibitem{Mar} V. Markovic, {\it Quasisymmetric groups},
Journal Amer. Math. Soc. 19 (2006), no. 3, 673--715.

\bibitem{NS} S. Nag and D. Sullivan, {\it Teichm\"uller theory and the
universal period mappings via quantum calculus and the $H^{1/2}$
space of the circle}, Osaka J. Math. 32 (1995), 1-34.

\bibitem{Pe} R. C. Penner, {\it Universal constructions in
Teichm\"uller theory}, Adv. Math {\bf 98} (1993), 143-215.

\bibitem{Pe1} ---, {\it  The decorated Teichm\"uller space of
punctured surfaces}, Comm. Math. Phys. {\bf 113} (1987), 299-339.

\bibitem{Sa1} D. \v Sari\' c, {\it Real and complex earthquakes},
Trans. Amer. Math. Soc. 358 (2006), no. 1, 233--249.

\bibitem{Th} W. Thurston, {\it Three-dimensional geometry and topology}, Vol. 1.
Edited by Silvio Levy. Princeton Mathematical Series, 35.
Princeton University Press, Princeton, NJ, 1997.

\bibitem{Th1} {\it Minimal stretch maps between hyperbolic surfaces}, Unpublished
preprint, 1986. arXiv:math.GT/9801039



\bibitem{Wi} A. Witten, {\it Coadjoint orbits of the Virasoro group},
Comm. Math. Phys. Volume 114, Number 1 (1988), 1-53.



\end{thebibliography}
\end{document}